\documentclass[12pt, reqno]{amsart}

\title{On the density of $A+B$ when $B$ has few elements}

\usepackage[T1]{fontenc}
\usepackage{amsmath}
\usepackage{amssymb}
\usepackage{amsthm}
\usepackage[left=3.5cm, right=3.5cm, paperheight=11.8in]{geometry}
\usepackage{hyperref}
\usepackage{fancyhdr}
\usepackage{enumitem}
\usepackage{comment}
\usepackage{nicefrac}
\usepackage{mathrsfs}
\usepackage{bm}
\usepackage{graphicx}
\usepackage[utf8]{inputenc}
\usepackage{cancel}
\usepackage{mathtools}

\AtBeginDocument{%
   \def\MR#1{}
}

\newtheorem{thm}{Theorem}[section]
\newtheorem{cor}[thm]{Corollary}
\newtheorem{lem}[thm]{Lemma}
\newtheorem{prop}[thm]{Proposition}

\theoremstyle{definition} 
\newtheorem{defi}[thm]{Definition}
\let\olddefi\defi
\renewcommand{\defi}{\olddefi\normalfont}
\newtheorem{example}[thm]{Example}
\let\oldexample\example
\renewcommand{\example}{\oldexample\normalfont}
\newtheorem{rmk}[thm]{Remark}
\let\oldrmk\rmk
\renewcommand{\rmk}{\oldrmk\normalfont}

\author[P.~Leonetti]{Paolo Leonetti}
\address{
Universit\`{a} degli Studi dell'Insubria\\ via Monte Generoso 71 \\ Varese 21100\\ Italy}
\email{leonetti.paolo@gmail.com}

\keywords{Sumset; asymptotic density; probabilistic method.}
\subjclass[2020]{Primary: 11B13, 11B05; Secondary: 11B30.}

\hypersetup{
    pdftitle={On the density of $A+B$ when $B$ has few elements},
    pdfauthor={Paolo Leonetti},
    pdfmenubar=false,
    pdffitwindow=true,
    pdfstartview=FitH,
    colorlinks=true,
    linkcolor=blue,
    citecolor=green,
    urlcolor=cyan
}

\providecommand{\MR}[1]{}

\providecommand{\MR}{\relax\ifhmode\unskip\space\fi MR }

\begin{document}

\begin{abstract} 
Let $\mathsf{d}$ be the asymptotic density on the positive integers $\mathbb{N}^+$. We provide sufficient conditions on an infinite set $B\subseteq \mathbb{N}^+$ so that for each $\alpha \in [0,1]$ there exists a set $A\subseteq \mathbb{N}^+$ such that $\mathsf{d}(A)=0$ and $\mathsf{d}(A+B)=\alpha$. For instance, if $(b_n: n \in \mathbb{N}^+)$ is the increasing enumeration of $B$, it is sufficient that $\liminf_n b_{n+1}/b_n>1$. 
Moreover, we show that, if $|B\cap [1,n]|\ll \log(n)^{1-\varepsilon}$ for some $\varepsilon>0$, then for each $\alpha \in [0,1]$ there exists a set $A\subseteq \mathbb{N}^+$ such that $\mathsf{d}(A)=\mathsf{d}(A+B)=\alpha$. 
Conversely, we show that, for each $\varepsilon>0$, there exists an infinite set $B\subseteq \mathbb{N}^+$ such that $|B\cap [1,n]| \ll n^{2/3+\varepsilon}$ and $\mathsf{d}(A+B) \in \{0,1\}$ for every $A\subseteq \mathbb{N}^+$ such that $A+B$ admits asymptotic density. 
\end{abstract}
\maketitle
\thispagestyle{empty}

\section{Introduction and Main results}

Let $\mathsf{d}$ be the asymptotic density on the set of positive integers $\mathbb{N}^+:=\{1,2,\ldots\}$; more explicitly, if $S\subseteq \mathbb{N}^+$, then 
\begin{equation}\label{def:asympdensity}
\mathsf{d}(S):=\lim_{n\to \infty} \frac{|S\cap [1,n]|}{n},
\end{equation}
provided that the limit exists.  
Hereafter, given $S\subseteq \mathbb{N}^+$ and $\alpha \in [0,1]$, a notation of the type \textquotedblleft $\mathsf{d}(S)=\alpha$\textquotedblright\, stands for \textquotedblleft the limit in \eqref{def:asympdensity} exists and it is equal to $\alpha$.\textquotedblright

The sumset of two nonempty sets $A,B\subseteq \mathbb{N}^+$ is denoted by 
$$
A+B:=\{x+y: x\in A, y \in B\}.
$$
There is a long list of articles studying the relationships between the asymptotic density $\mathsf{d}$ and sumsets $A+B$, see e.g. \cite{MR4080054, MR4474049,  MR5517, MR64798, MR4197427, MR4009479,  MR4474053, MR4439850, MR4746441, MR63389, MR4879477, MR90618} and references therein. 

Here, we study a variant of the following result by Faisant et al., see \cite[Theorem 2.2]{MR4197427}, cf. also \cite[Theorem 3.2]{MR4439850} for a generalization. 
\begin{thm}\label{thm:grekos}
Let $B\subseteq \mathbb{N}^+$ be a nonempty finite set. Then for each $\alpha \in [0,1]$ there exists a set $A\subseteq \mathbb{N}^+$ such that $\mathsf{d}(A+B)=\alpha$.
\end{thm}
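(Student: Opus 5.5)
We prove Theorem \ref{thm:grekos} by an explicit block construction. The endpoint cases are immediate: for $\alpha=1$ take $A=\mathbb{N}^+$, and for $\alpha=0$ take $A=\{1\}$, so that $A+B$ is finite. Now fix $\alpha\in(0,1)$ and put $b:=\max B$. The heuristic is that $A+B$ is obtained from $A$ by translating it by at most $b$ positions. Hence if $A$ is a union of long intervals, then $A+B$ differs from $A$ (shifted by $\min B$) only by about $b$ elements per interval. To make this error negligible, we will take intervals whose lengths tend to infinity.

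\textbf{Construction.} Choose an increasing sequence of integers $L_k\to\infty$ with $L_{k+1}/L_k\to 1$, for instance $L_k=k$. Partition $\mathbb{N}^+$ into consecutive blocks $I_k=(N_{k-1},N_k]$ of length $L_k$, so that $N_k=\sum_{j\le k}L_j$. Inside $I_k$ we take the initial interval $J_k=(N_{k-1},N_{k-1}+\lfloor \alpha L_k\rfloor]$ and set
\[
A:=\bigcup_k J_k.
\]
Then $A+B=\bigcup_k (J_k+B)$. Each $J_k+B$ is contained in $(N_{k-1}+\min B,\,N_{k-1}+\lfloor\alpha L_k\rfloor+b]$ and contains $J_k+\min B$. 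Consequently
\[
\lfloor\alpha L_k\rfloor \le |J_k+B|\le \lfloor\alpha L_k\rfloor+b.
\]
Once $L_k$ is large, $(1-\alpha)L_k>b$, so each $J_k+B$ stays inside $I_k$ apart from a shift of at most $b$ into $I_{k+1}$. Overlaps between different $k$ can only decrease the count, and the lower bound above is unaffected by them.

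\textbf{Density computation.} For $n\in I_k$ we bound $|(A+B)\cap[1,n]|$ from above by $\sum_{j<k}(\alpha L_j+b)+\min(n-N_{k-1},\alpha L_k+b)$, and from below by the same expression with $b$ replaced by $0$ and a loss of at most $b$ at the boundary. We have $\sum_{j<k}b=bk=o(N_{k-1})$, since $N_{k-1}\asymp k^2$. We also have $L_k=o(N_{k-1})$. Therefore, for $n\in I_k$,
\[
\frac{|(A+B)\cap[1,n]|}{n}=\frac{\alpha N_{k-1}+O(L_k)+o(N_{k-1})}{N_{k-1}+O(L_k)}\longrightarrow\alpha.
\]

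The main point to check is the uniformity of this limit for $n$ in the middle of a block. It holds precisely because $L_k=o(N_{k-1})$: the within-block fluctuation, which can be as large as $\alpha L_k$, is negligible compared to $n$. This is why we chose $L_k=k$ and not geometrically growing blocks. With this choice everything else is routine bookkeeping. As a side remark, $\mathsf{d}(A)=\alpha$ as well, although Theorem \ref{thm:grekos} does not require it.
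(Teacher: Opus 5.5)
The paper does not prove this statement. It is quoted from Faisant et al.\ (Theorem 2.2 there) purely as motivation, so there is no in-paper proof to match. Your block construction is a correct and complete proof, by a route different from anything the paper itself does. It rests on two facts, and $L_k=k$ gives both:
\begin{itemize}
\item the number of blocks meeting $[1,n]$ is $o(n)$, so the $O(b)$ boundary error per block is negligible;
\item $L_k=o(N_{k-1})$, so the fluctuation of size up to $\alpha L_k$ inside the current block is negligible.
\end{itemize}
In fact $L_{k+1}/L_k\to1$ alone already forces $L_k=o(N_{k-1})$. One small imprecision: once $(1-\alpha)L_k>b$, the set $J_k+B$ lies entirely inside $I_k$. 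So spill into $I_{k+1}$ happens only for finitely many $k$ and costs $O(1)$ in total.

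It is useful to compare your argument with the constructions the paper gives for infinite $B$. Plain intervals fail there because the boundary cost of an interval is governed by the diameter of the part of $B$ acting on it, and this diameter is unbounded. The proof of Theorem \ref{thm:logarithmic} gets around this via Lemma \ref{lem:periodic}. It uses sets $P=\{n: un \bmod q\in J\}$, which are intervals after the change of variables $n\mapsto un \bmod q$. The multiplier $u$ is chosen by simultaneous Diophantine approximation so that every element of $D_j$ is sent close to $0$ modulo $q$, and these periodic pieces are glued along blocks $[t_j,t_{j+1})$.

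Your construction is essentially the degenerate case $u=1$ with periods tending to infinity. That is enough here precisely because $B$ is finite, so no approximation lemma is needed. Like that proof, yours also yields $\mathsf{d}(A)=\mathsf{d}(A+B)=\alpha$.

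By contrast, the random construction of Theorem \ref{thm:chucorrectedtechnical}, which gives $\mathsf{d}(A)=0$, cannot have a finite-$B$ analogue. Indeed $|(A+B)\cap[1,n]|\le |B|\,|A\cap[1,n]|$, so $\mathsf{d}(A)=0$ forces $\mathsf{d}(A+B)=0$.
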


Following \cite[Definition 1.2]{MR4414919}, an infinite set $B\subseteq \mathbb{N}^+$ with increasing enumeration $(b_n: n \in \mathbb{N}^+)$ is said to be \emph{highly sparse} if 
$$
\lim_{n\to \infty} \frac{b_n}{b_{n+1}}=0.
$$
Chu claimed in \cite[Theorem 1.5]{MR4414919} that the analogue of Theorem \ref{thm:grekos} holds also for infinite sets $B\subseteq \mathbb{N}^+$ which are highly sparse. However, the same author realized that his proof is flawed \cite{MR4474049}, and asked whether his claim is actually correct in \cite[Question 0.4]{MR4474049}. 
We will provide an affirmative answer (see Corollary \ref{cor:chucorrected} below). 

In this regard, it is worth remarking that the analogue of Theorem \ref{thm:grekos} holds for 
all nonempty sets $B\subseteq \mathbb{N}^+$ which cover $o(k!)$ remainders modulo $k!$ as $k\to \infty$, that is, 
$B$ has Buck density $0$, see \cite[Theorem 2.3]{MR4746441}; cf. also \cite[Section 4]{LeoTri}. It is remarkable that many sets of integers satisfy the latter technical condition,   
including for instance the set of primes, the set of perfect powers, the set of factorials, etc., see \cite{MR4360486}. On the other hand, there exist highly sparse sets $B\subseteq \mathbb{N}^+$ which do not satisfy such condition, 
as for example $B=\{n!+n: n\in \mathbb{N}^+\}$, cf. \cite[Section 7]{LeoTri}. 

Our first main result follows: 
\begin{thm}\label{thm:chucorrectedtechnical}
    Let $B\subseteq \mathbb{N}^+$ be an infinite set with increasing enumeration $(b_n: n \in \mathbb{N}^+)$. Let us suppose that there exists a function $r: \mathbb{N}^+ \to \mathbb{N}^+$ for which the following hold\textup{:}
\begin{enumerate}[label={\rm (\roman{*})}]
        \item \label{item:0} $r(n)<n$ for all $n \ge 2$\textup{;}
        \item \label{item:1} $r(n)=o(n)$ as $n\to \infty$\textup{;}
        \item \label{item:2} $\mathsf{d}(E_r)=0$, where $E_r:=\bigcup_{k\ge 2} ([b_k,b_k+b_{k-r(k)}) \cap \mathbb{N}^+)$\textup{.}
\end{enumerate}
Then for each $\alpha \in [0,1]$ there exists a set $A\subseteq \mathbb{N}^+$ such that $\mathsf{d}(A)=0$ and $\mathsf{d}(A+B)=\alpha$.
\end{thm}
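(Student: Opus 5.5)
For $\alpha=0$ one can take $A=\{1\}$. Then $A+B=B+1$, and $B\subseteq E_r$ because $b_k\in[b_k,b_k+b_{k-r(k)})$; hence $\mathsf{d}(B)=0$. For $\alpha\in(0,1]$ I would use a probabilistic construction in which the inclusion probabilities are tuned to the counting function of $B$.

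Set $N(n):=|B\cap[1,n]|$ and $\lambda:=-\log(1-\alpha)$. For $\alpha=1$, let $\lambda=\lambda(n)\to\infty$ very slowly. Choose $A$ at random, putting each $n$ in $A$ independently with probability
$$
p_n:=\min\{1,\lambda/N(n)\}.
$$
Since $B$ is infinite, $N(n)\to\infty$, so $p_n\to0$ and $\mathsf{d}(A)=0$ almost surely by the strong law for independent Bernoulli variables.

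Fix $x$ with $b_k\le x<b_{k+1}$ and $x\notin E_r$, so that $x\ge b_k+b_{k-r(k)}$. Then
$$
\mathbb{P}(x\notin A+B)=\prod_{j\le k}\bigl(1-p_{x-b_j}\bigr).
$$
I would split the indices $j$ into two ranges.
\begin{itemize}
\item For $j\le k-r(k)$ we have $x-b_j\in[b_k,x)$. Hence $N(x-b_j)=k$ and $p_{x-b_j}=\lambda/k$.
\item For $k-r(k)<j\le k$ we have $x-b_j\ge x-b_k\ge b_{k-r(k)}$. Hence $N(x-b_j)\in[k-r(k),k]$, so $p_{x-b_j}\le\lambda/(k-r(k))=(1+o(1))\lambda/k$ by \ref{item:1}.
\end{itemize}
Condition \ref{item:0} guarantees that the index $k-r(k)$ makes sense. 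Since every $p$ is $o(1)$, the product equals $\exp\bigl(-(1+o(1))\lambda\bigr)\to1-\alpha$, uniformly over $x\notin E_r$. Because $\mathsf{d}(E_r)=0$, it follows that $\mathbb{E}|(A+B)\cap[1,n]|=(\alpha+o(1))n$. For $\alpha=1$ the same computation gives probability $\to1$.

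The main obstacle is concentration, i.e.\ passing from the expectation to an actual set $A$ with $\mathsf{d}(A+B)=\alpha$. For $x,y\le n$ the indicators $I_x=\mathbf 1[x\notin A+B]$ and $I_y$ are dependent only through common elements $x-b_j=y-b_{j'}$. Since each $p$ is small, one gets
$$
\mathrm{Cov}(I_x,I_y)\ll\sum_{\text{shared }m}p_m .
$$
The number of triples $(x,j,j')$ producing a shared element is at most $nN(n)^2$, and the relevant $p$'s are $\asymp\lambda/N(n)$. Thus $\mathrm{Var}\,|(A+B)\cap[1,n]|\ll n+\lambda nN(n)=o(n^2)$, using $N(n)=o(n)$, which follows from $\mathsf{d}(B)=0$ (and for $\alpha=1$, choosing $\lambda$ growing slowly enough). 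Chebyshev then gives convergence in probability, but not necessarily almost sure convergence. I would first try Chebyshev along a lacunary sequence $n_i=\lfloor(1+\delta)^i\rfloor$ and interpolate between consecutive terms. If the variance bounds are not summable, I would instead build $A$ deterministically block by block on $[n_i,n_{i+1})$. At each stage I would fix a realization that keeps the counts within $\varepsilon_i n_i$ of their means on the current blocks, conditioning on the earlier choices; this is possible because the previous blocks influence only $O(N\cdot\text{block})$ interactions, which are controlled by the same covariance estimate. Letting $\delta,\varepsilon_i\to0$ then yields $\mathsf{d}(A+B)=\alpha$ while keeping $\mathsf{d}(A)=0$.
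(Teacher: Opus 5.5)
Your expectation computation is essentially the paper's Lemma~\ref{lem:basicestimate}, with $p_n=\lambda/N(n)$ in place of the exact $1-(1-\alpha)^{1/N(n)}$. The remark $B\setminus\{b_1\}\subseteq E_r$ is a cheaper route to $\mathsf{d}(B)=0$ than the paper's.

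\textbf{The gap.} It is in the concentration step, which is exactly where the hypotheses must be used quantitatively, and there you only feed in $N(n)=o(n)$.
\begin{itemize}
\item \emph{No rate.} To get $\mathsf{d}(A+B)=\alpha$ you need a single realization with $S_n/n\to\alpha$ along all $n$. A bound $\mathrm{Var}(S_n)=o(n^2)$ with no rate gives only convergence in probability. Chebyshev along $n_i=\lfloor(1+\delta)^i\rfloor$ requires $\sum_i \mathrm{Var}(S_{n_i})/n_i^2<\infty$. With your bound this means $\sum_i N(n_i)/n_i<\infty$, which does not follow from $\mathsf{d}(B)=0$ (take $N(n)\asymp n/\log n$).
\item \emph{The fallback does not work as stated.} Once $A\cap[1,n_i)$ is frozen, the conditional probability of $x\in A+B$ for every later $x$ depends on the frozen bits, through the $j$ with $x-b_j<n_i$. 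Nothing in your sketch controls this drift.
\item \emph{The variance bound is derived incorrectly.} A shared element $m=x-b_j=y-b_{j'}$ can be any $m\le n$, and for small $m$ the probability $p_m=\lambda/N(m)$ is far larger than $\lambda/N(n)$. The correct estimate is $\mathrm{Var}(S_n)\le n+\sum_{m\le n}p_m N(n-m)^2$. Under $N(n)=o(n)$ alone this need not be $o(n^2)$. It can be repaired for pairs $x,y\notin E_r$, since then every such $m$ satisfies $m\ge b_{k-r(k)}$.
\end{itemize}

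\textbf{The missing idea.} Conditions (i)--(iii) force $B$ to be extremely sparse.
\begin{itemize}
\item Testing (iii) at $b_k+b_{k-r(k)}-1$ gives $b_{k-r(k)}=o(b_k)$.
\item Once $r(k)\le k/(2M)$, item (ii) shows that iterating $k\mapsto k-r(k)$ takes at least $M$ steps to go from $n$ to below $n/2$.
\item Hence $\log b_n-\log b_{\lfloor n/2\rfloor}\ge M(M-1)$ for large $n$, so $\log b_n/\log n\to\infty$ and $N(n)=o(n^{\varepsilon})$ for every $\varepsilon>0$. This is Proposition~\ref{prop:012-imply-stronger-growth}.
\end{itemize}

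\textbf{How to finish.} With this, your corrected bound gives $\mathrm{Var}(S_n)\le n+nN(n)^2=O(n^{3/2})$. Chebyshev along a geometric sequence, together with the monotonicity $S_{n_i}\le S_n\le S_{n_{i+1}}$, then closes the argument. The paper instead applies McDiarmid's inequality with Lipschitz constant $N(n)$. Its tail $2\exp(-2\varepsilon^2 n/N(n)^2)$ is at most $n^{-2}$, so it is summable over all $n$ and no lacunary sequence is needed.

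For $\alpha=1$ the paper simply cites Lorentz's theorem. Your variant with $\lambda(n)\to\infty$ slowly also works once concentration is in place.
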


It is worth remarking, as we will show in Proposition \ref{prop:012-imply-stronger-growth} below, that if an infinite set $B\subseteq \mathbb{N}^+$ admits a function $r$ satisfying items \ref{item:0}--\ref{item:2} then necessarily $|B\cap [1,n]|=o_\varepsilon(n^\varepsilon)$ as $n\to \infty$.

One might conjecture that the existence of a function $r: \mathbb{N}^+ \to \mathbb{N}^+$ satisfying items \ref{item:0}--\ref{item:2} is so strong as to imply that $B$ has at least an exponential growth, namely, $|B\cap [1,n]|=O(\log n)$ as $n\to \infty$. However, this is false, as we show in the example below. 
\begin{example}\label{example:notlogn}
Define the infinite set $B=\{b_k: k \in \mathbb{N}^+\}$ and the map $r: \mathbb{N}^+\to \mathbb{N}^+$ by 
$$
\forall k \in \mathbb{N}^+, \quad 
b_k:=\lfloor e^{\sqrt{k}}\rfloor
\quad \text{ and }\quad 
r(k):=\lfloor k^{3/4}\rfloor
$$
Of course, we have $r(k)=o(k)$ as $k\to \infty$, and $r(k)<k$ for all $k\ge 2$. In addition, by the concavity of $x\mapsto \sqrt{x}$ and since $r(k) \ge \frac{1}{2}k^{3/4}$ as $k\to \infty$, we get 
$$
b_{k-r(k)} 
\le e^{\sqrt{k-r(k)}} 
\le e^{\sqrt{k}-\frac{r(k)}{2\sqrt{k}}} 
\le e^{\sqrt{k}-\frac{1}{4}\sqrt[4]{k}}
$$
for all sufficiently large $k \in \mathbb{N}^+$, which implies 
\begin{displaymath}
    \begin{split}
\sum_{k=2}^n b_{k-r(k)}
&\le O(1)+\sum_{2\le k\le n/2} e^{\sqrt{k}-\frac{1}{4}\sqrt[4]{k}}
+\sum_{n/2<k\le n} e^{\sqrt{k}-\frac{1}{4}\sqrt[4]{k}}\\
&\le O(1)+ ne^{\sqrt{n/2}}
+ne^{\sqrt{n}-\frac{1}{4}\sqrt[4]{n/2}}\\
&\le O(1)+o(e^{\sqrt{n}})+o(e^{\sqrt{n}})=o(e^{\sqrt{n}})=o(b_n)
    \end{split}
\end{displaymath}
as $n\to \infty$. Now, for each sufficiently large $N$, let $n$ be the unique integer such that $b_n\le N<b_{n+1}$. Since no interval defining $E_r$ with index larger than $n$ meets $[1,N]$, it follows that
\begin{displaymath}
        \frac{|E_r \cap [1,N]|}{N}
        \le \frac{\sum_{k=2}^n b_{k-r(k)}}{b_n}
        \longrightarrow 0
        \qquad\text{as }N\to\infty.
\end{displaymath}
Therefore $\mathsf{d}(E_r)=0$. Lastly, $|B\cap [1,b_n]|=n$ is not $O(\log(b_n))=O(\sqrt{n})$ as $n\to \infty$.
\end{example}

As a consequence of Theorem \ref{thm:chucorrectedtechnical}, 
we show that the analogue of Theorem \ref{thm:grekos} holds if the elements of the infinite set $B$ grow sufficiently fast:
\begin{thm}\label{thm:exponential}
Let $B\subseteq \mathbb{N}^+$ be an infinite set with increasing enumeration $(b_n: n \in \mathbb{N}^+)$ and suppose that 
$$
\exists q \in \mathbb{N}^+, \qquad 
\liminf_{n\to \infty}\frac{b_{n+q}}{b_n}>1.
$$
Then for each $\alpha \in [0,1]$ there exists a set $A\subseteq \mathbb{N}^+$ such that $\mathsf{d}(A)=0$ and $\mathsf{d}(A+B)=\alpha$.
\end{thm}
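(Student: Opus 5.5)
We deduce Theorem \ref{thm:exponential} from Theorem \ref{thm:chucorrectedtechnical}, so it is enough to build a function $r:\mathbb{N}^+\to\mathbb{N}^+$ satisfying items \ref{item:0}--\ref{item:2}. The hypothesis gives $\lambda>1$ and $n_0$ with $b_{n+q}\ge \lambda b_n$ for all $n\ge n_0$. Iterating, $b_{n}\ge \lambda^{j} b_{n-jq}$ whenever $n-jq\ge n_0$. Thus $b_{k-m}\le \lambda^{-\lfloor m/q\rfloor} b_k$ whenever $k-m\ge n_0$.

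We choose $r(k)$ slowly tending to infinity with $r(k)=o(k)$, for instance $r(k):=\max\{1,\lfloor \sqrt{k}\rfloor\}$ for $k\ge 2$ and $r(1)=1$. Items \ref{item:0} and \ref{item:1} are then immediate. For large $k$ we have $k-r(k)\ge n_0$, hence
\[
b_{k-r(k)}\le \varepsilon_k b_k,\qquad \varepsilon_k:=\lambda^{-\lfloor \sqrt{k}/q\rfloor}\longrightarrow 0 .
\]

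For item \ref{item:2}, fix a large $N$ and let $n$ be such that $b_n\le N<b_{n+1}$. Only intervals with index $k\le n$ meet $[1,N]$, so
\[
|E_r\cap[1,N]|\le O(1)+\sum_{k\le n}\varepsilon_k b_k .
\]
The main step is to show that this sum is $o(b_n)$, and we use the geometric growth for it. We split the sum at $k=n-Mq$ with $M$ fixed.
\begin{itemize}
\item The tail terms $n-Mq<k\le n$ are at most $Mq$ in number, each bounded by $\varepsilon_k b_n$ with $\varepsilon_k\to0$. Their total is therefore $o(b_n)$ for fixed $M$.
\item For the head terms $k\le n-Mq$, group the indices into residue blocks of length $q$ counted down from $n$. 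In the $j$-th block $b_k\le \lambda^{-j}b_n$ (up to the bounded initial segment below $n_0$, which contributes $O(1)$). Hence the head is at most $q\,b_n\sum_{j\ge M}\lambda^{-j}+O(1)=O(\lambda^{-M})\,b_n+O(1)$.
\end{itemize}
Dividing by $N\ge b_n$, letting $N\to\infty$ and then $M\to\infty$ gives $\mathsf{d}(E_r)=0$. Theorem \ref{thm:chucorrectedtechnical} then gives the conclusion.

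The only delicate point is the head estimate: it needs the uniform bound $b_k\le \lambda^{-\lfloor (n-k)/q\rfloor}b_n$ for $n_0\le k\le n$. This follows directly from iterating the liminf inequality past $n_0$. No $\varepsilon_k$ factor is needed there, since we only bound $b_{k-r(k)}\le b_k$ in the head.
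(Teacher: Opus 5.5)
Your proof is correct and follows essentially the same route as the paper: you reduce to Theorem~\ref{thm:chucorrectedtechnical} with $r(n)=\lfloor\sqrt n\rfloor$, and you use the geometric growth $b_{k+q}\ge\lambda b_k$ with a head/tail split to show $\sum_{k\le n}b_{k-r(k)}=o(b_n)$. The only difference is cosmetic: the paper splits at $n/2$ and bounds each part by (number of terms)$\times$(largest term), which needs $r(k)\gg\log k$, whereas your split at $n-Mq$, with a geometric series for the head, works for any $r(k)\to\infty$ with $r(k)=o(k)$.
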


As a first application of Theorem \ref{thm:exponential}, we have the following (with $\mathbb{N}:=\{0,1,\ldots\}$): 
\begin{cor}\label{cor:sumwithfinite}
    Let $B\subseteq \mathbb{N}^+$ be an infinite set with increasing enumeration $(b_n: n \in \mathbb{N}^+)$ and suppose that 
$$
\liminf_{n\to \infty}\frac{b_{n+1}}{b_n}>1.
$$
Pick also a nonempty finite set $C\subseteq \mathbb{N}$. Then for each $\alpha \in [0,1]$ there exists a set $A\subseteq \mathbb{N}^+$ such that $\mathsf{d}(A)=0$ and $\mathsf{d}(A+B+C)=\alpha$.
\end{cor}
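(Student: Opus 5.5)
The idea is to apply Theorem \ref{thm:exponential} to the set $B' := B+C$. Since $A+B+C = A+B'$, it is enough to check that $B'$ is an infinite subset of $\mathbb{N}^+$ whose increasing enumeration $(b'_n)$ satisfies $\liminf_n b'_{n+q}/b'_n>1$ for some $q\in\mathbb{N}^+$. Theorem \ref{thm:exponential} then gives, for each $\alpha\in[0,1]$, a set $A$ with $\mathsf{d}(A)=0$ and $\mathsf{d}(A+B')=\alpha$, which is exactly the claim.

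The elementary points come first. Since $C\subseteq\mathbb{N}$, every element of $B'$ is positive, so $B'\subseteq\mathbb{N}^+$. Moreover $B'\supseteq B+\min C$, so $B'$ is infinite. Set $m:=|C|$ and $M:=\max C$, and fix $\lambda>1$ and $n_0$ such that $b_{n+1}\ge\lambda b_n$ for all $n\ge n_0$.

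The main step is a counting argument. Take $x=b'_n$ large. Every element of $B'$ in the interval $[x,\lambda' x)$ has the form $b_k+c$ with $b_k\in(x-M,\lambda' x)$, where $1<\lambda'<\lambda$ is chosen suitably (say $\lambda':=\sqrt{\lambda}$). Because of the lacunarity $b_{k+1}\ge\lambda b_k$, two consecutive $b_k$ that both lie in $(x-M,\lambda' x)$ would force $\lambda(x-M)<\lambda' x$. This fails for $x$ large, since $(\lambda-\lambda')x>\lambda M$ eventually. Hence at most one $b_k$ lies in that window, and the interval $[x,\lambda'x)$ contains at most $m$ elements of $B'$. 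Taking $q:=m$, we get $b'_{n+q}\ge\lambda' b'_n$ for all sufficiently large $n$, so $\liminf_n b'_{n+q}/b'_n\ge\lambda'>1$.

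Two details need checking, and neither is a real obstacle. First, the elements $b_k$ with $k<n_0$ are finitely many and lie below $x-M$ once $x$ is large, so they do not interfere. Second, the counting must be done with $b'_n$ itself as the left endpoint; this is fine because the bound holds for every large real $x$. The only subtlety is why the corollary needs this argument at all: even if $b_{n+1}/b_n$ is bounded away from $1$, the ratio $b'_{n+1}/b'_n$ can be close to $1$ when $C$ has several elements, because $b_k+c$ and $b_k+c'$ are adjacent. This is precisely why Theorem \ref{thm:exponential} allows a shift $q$ rather than $q=1$.
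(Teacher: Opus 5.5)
Your proof is correct and follows essentially the paper's route: apply Theorem \ref{thm:exponential} to $B+C$ with $q=|C|$. The only difference is in how the liminf condition is checked: the paper notes that the blocks $\{b_n\}+C$ are eventually disjoint and increasing, whereas you count elements in a window $[x,\lambda'x)$, as the paper itself does for Corollary \ref{cor:mixedfiniteunions}. One harmless slip: the admissible $b_k$ lie in $[x-M,\lambda'x)$ rather than $(x-M,\lambda'x)$, which changes nothing in the argument.
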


As another immediate consequence of Theorem \ref{thm:exponential}, since highly sparse sets $B$ satisfy $\lim_n b_{n+1}/b_n=\infty$ by definition, we conclude that Chu's open question \cite[Question 0.4]{MR4474049} has a positive answer 
(using Theorem \ref{thm:exponential} with $q=1$): 
\begin{cor}\label{cor:chucorrected}
Let $B\subseteq \mathbb{N}^+$ be an infinite set which is highly sparse. Then for each $\alpha \in [0,1]$ there exists a set $A\subseteq \mathbb{N}^+$ such that $\mathsf{d}(A)=0$ and $\mathsf{d}(A+B)=\alpha$.
\end{cor}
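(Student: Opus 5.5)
Corollary \ref{cor:chucorrected} follows directly from Theorem \ref{thm:exponential} with $q=1$, so the work is checking that theorem's hypothesis. Let $B$ be highly sparse with increasing enumeration $(b_n: n\in\mathbb{N}^+)$. By definition $b_n/b_{n+1}\to 0$, and since every $b_n$ is a positive integer, taking reciprocals gives $b_{n+1}/b_n\to\infty$.

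In particular
$$
\liminf_{n\to\infty}\frac{b_{n+1}}{b_n}=\infty>1,
$$
so the hypothesis of Theorem \ref{thm:exponential} holds with $q=1$. (If one insists on a finite value for the $\liminf$, the statement only needs $b_{n+1}/b_n\ge 2$ for all large $n$, which is immediate.) Theorem \ref{thm:exponential} then gives, for every $\alpha\in[0,1]$, a set $A\subseteq\mathbb{N}^+$ with $\mathsf{d}(A)=0$ and $\mathsf{d}(A+B)=\alpha$.

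There is no real obstacle at this stage: all the difficulty sits in Theorem \ref{thm:chucorrectedtechnical} and in deducing Theorem \ref{thm:exponential} from it, both of which we take as given.
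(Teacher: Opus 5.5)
Your proposal is correct and is exactly the paper's argument: high sparsity gives $b_{n+1}/b_n\to\infty$, so Theorem \ref{thm:exponential} applies with $q=1$. Your parenthetical about using a finite lower bound such as $b_{n+1}/b_n\ge 2$ is a sensible guard. The paper's proof of Theorem \ref{thm:exponential} sets $c$ equal to an average involving the $\liminf$, which is infinite here, and taking any finite $c$ with $1<c<\liminf_n b_{n+1}/b_n$ avoids that issue.
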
 

As a last application of Theorem \ref{thm:exponential}, we include also cases such as $B=\bigcup_{n} \{2^n, 3^n\}$: 
\begin{cor}\label{cor:mixedfiniteunions}
    Let $B_1,\ldots,B_q\subseteq \mathbb{N}^+$ be nonempty sets such that $B:=B_1\cup \cdots \cup B_q$ is infinite. Suppose that each infinite $B_i$ has increasing enumeration $(b_{i,n}:n\in\mathbb N^+)$ satisfying $\liminf_n b_{i,n+1}/b_{i,n}>1$. 
%
Then for each $\alpha \in [0,1]$ there exists a set $A\subseteq \mathbb{N}^+$ such that $\mathsf{d}(A)=0$ and $\mathsf{d}(A+B)=\alpha$. 
\end{cor}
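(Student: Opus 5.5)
The plan is to reduce Corollary \ref{cor:mixedfiniteunions} to Theorem \ref{thm:exponential} with $q$ equal to the number of indices $i$ for which $B_i$ is infinite. Let $(b_n)$ be the increasing enumeration of $B$. Let $I\subseteq\{1,\ldots,q\}$ be the set of such indices, which is nonempty since $B$ is infinite. The finite sets $B_j$, $j\notin I$, contribute only finitely many elements to $B$. Write $F:=\bigcup_{j\notin I}B_j$ and $M:=\max F$ (or $0$ if $F=\emptyset$).

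The key step is to show that
\[
\liminf_{n\to\infty} \frac{b_{n+p}}{b_n}>1
\]
for a suitable fixed $p$. I would take $p:=|I|+1$, or $p:=|I|$ if one is careful. First choose $\lambda>1$ and $n_0$ such that $b_{i,m+1}\ge \lambda b_{i,m}$ for all $i\in I$ and all $m\ge n_0$; this is possible by the hypothesis and because $I$ is finite. Next fix $n$ large, so that $b_n>\max(M,\max_{i\in I} b_{i,n_0})$. The interval $[b_n,\lambda b_n)$ contains no element of $F$. For each $i\in I$, any two consecutive elements of $B_i$ above $b_{i,n_0}$ have ratio at least $\lambda$, so $B_i\cap[b_n,\lambda b_n)$ contains at most one element. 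Hence $|B\cap[b_n,\lambda b_n)|\le |I|$. The interval contains $b_n$ itself, so $b_{n+|I|}\ge \lambda b_n$, and the $\liminf$ above is at least $\lambda>1$ with $p=|I|\le q$. Overlaps among the $B_i$ only reduce the count, so they do no harm.

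Theorem \ref{thm:exponential} with $q$ replaced by $|I|$ then gives, for each $\alpha\in[0,1]$, a set $A$ with $\mathsf{d}(A)=0$ and $\mathsf{d}(A+B)=\alpha$.

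The only mild obstacle is the uniform choice of $\lambda$ and $n_0$ across the finitely many $i\in I$. It is handled by taking $\lambda$ strictly between $1$ and $\min_{i\in I}\liminf_m b_{i,m+1}/b_{i,m}$, and taking $n_0$ to be the maximum of the thresholds for the individual $i$.
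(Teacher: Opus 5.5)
Your proof is correct and follows the same route as the paper. You pick a uniform ratio $\lambda>1$ for the infinite $B_i$, note that each of them meets $[b_n,\lambda b_n)$ in at most one element for large $n$, deduce $b_{n+|I|}\ge\lambda b_n$, and apply Theorem \ref{thm:exponential}; your explicit handling of the finite $B_j$ through $M=\max F$ simply spells out what the paper dismisses in one line.
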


It is worth noting that Theorem \ref{thm:chucorrectedtechnical} is sufficiently flexible to include also cases of infinite sets $B$ with $\lim_n b_{n+1}/b_n=1$. 
\begin{thm}\label{thm:lastmainthm}
Let $B\subseteq \mathbb{N}^+$ be an infinite set with increasing enumeration $(b_n: n \in \mathbb{N}^+)$ and suppose that 
$$
\exists \lambda>1, \qquad 
\log\left(\frac{b_{n+1}}{b_n}\right)\gg \frac{(\log n)^\lambda}{n}.
$$
Then for each $\alpha \in [0,1]$ there exists a set $A\subseteq \mathbb{N}^+$ such that $\mathsf{d}(A)=0$ and $\mathsf{d}(A+B)=\alpha$.
\end{thm}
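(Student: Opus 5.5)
The plan is to apply Theorem \ref{thm:chucorrectedtechnical}. We need a function $r$ with $r(n)<n$, $r(n)=o(n)$ and $\mathsf{d}(E_r)=0$. Set $\mu:=(1+\lambda)/2\in(1,\lambda)$ and
$$
r(n):=\min\left\{n-1,\ \left\lceil \frac{n}{(\log n)^{\mu}}\right\rceil\right\}
$$
for $n\ge 2$, with $r(1):=1$. Items \ref{item:0} and \ref{item:1} then hold trivially. The work is in item \ref{item:2}, and we argue as in Example \ref{example:notlogn}.

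\textbf{Step 1.} We show that $b_{k-r(k)}/b_k$ is small. By hypothesis there is $c>0$ with $\log(b_{j+1}/b_j)\ge c(\log j)^\lambda/j$ for all large $j$. Summing over $k-r(k)\le j<k$, and noting that $(\log j)^\lambda/j\ge \tfrac12(\log k)^\lambda/k$ for $j\in[k/2,k]$ and $k$ large, we get
$$
\log\frac{b_k}{b_{k-r(k)}}\ \ge\ \frac{c}{2}\, r(k)\frac{(\log k)^{\lambda}}{k}\ \ge\ \frac{c}{2}(\log k)^{\lambda-\mu}.
$$
Hence $b_{k-r(k)}\le b_k\exp(-\tfrac c2(\log k)^{\lambda-\mu})=:b_k\,\eta_k$, where $\eta_k\to0$.

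\textbf{Step 2.} As in Example \ref{example:notlogn}, for $b_n\le N<b_{n+1}$ we have
$$
|E_r\cap[1,N]|\le \sum_{k\le n} b_{k-r(k)}+\min(N-b_n,\,b_{n-r(n)}),
$$
where the last term accounts for the interval starting at $b_n$, which may be cut by $N$. Dividing by $N\ge b_n$, it suffices to show that $\sum_{k\le n} b_k\eta_k=o(b_n)$.

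\textbf{Step 3 (the main obstacle).} The difficulty is that $\eta_k$ decays only very slowly, while $b_k$ may grow only subexponentially, so we need the growth of $b_k$ to beat the number of terms. Split the sum at $m:=n-r(n)$. The tail part is
$$
\sum_{m<k\le n} b_k\eta_k\le b_n\max_{k>m}\eta_k=o(b_n),
$$
because $\eta$ is eventually decreasing and $m\to\infty$. For the head part, Step 1 gives
$$
\sum_{k\le m} b_k\eta_k\le m\,b_m\le n\,b_n\exp\left(-\tfrac c2(\log n)^{\lambda-\mu}\right),
$$
which is not small enough, since $(\log n)^{\lambda-\mu}$ may be much smaller than $\log n$. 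So we refine: iterate the blocks $n_0=n$, $n_{i+1}=n_i-r(n_i)$. Each block has at most $r(n_i)+1\le n$ terms, each bounded by $b_{n_i}\eta_{n_i}$. Alternatively, and more simply, we can choose $r$ larger so that the decay beats $n$. Take $r(n)=\lceil n/(\log n)^{\lambda-1}\cdot \omega(n)\rceil$ with $\omega\to0$ slowly? That gives exponent $\omega(n)\log n\cdot$ const, still possibly below $\log n$.

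So the cleaner route is a dyadic decomposition. For $k\in(n/2^{i+1},n/2^i]$, the hypothesis summed over $[k,n]$ yields
$$
\log\frac{b_n}{b_k}\gg i\,(\log(n/2^i))^\lambda \gg i(\log n)^{\lambda}
$$
when $2^i\le\sqrt n$, since each dyadic block contributes $\gg(\log)^\lambda$. The number of terms is at most $n/2^i$, so this block contributes at most
$$
b_n\,\frac{n}{2^i}\,e^{-c' i(\log n)^\lambda}\le b_n\,n\,e^{-c'(\log n)^\lambda},
$$
uniformly in $i\ge1$, and summed over $i$ this is $o(b_n)$ because $\lambda>1$. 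The range $k\le\sqrt n$ contributes at most $\sqrt n\, b_{\sqrt n}\le \sqrt n\, b_n e^{-c''(\log n)^{\lambda+1}}=o(b_n)$. Finally, the block $i=0$, that is $k\in(n/2,n]$, is handled using $\eta_k\to0$ and monotonicity: the terms $k\le m$ there are bounded via $b_k\le b_m e^{-\dots}$ by the same block-summation, and the terms $k>m$ as in the tail estimate. This is exactly where $\lambda>1$ is used, and it is the delicate point to verify. Once it is checked, $\mathsf{d}(E_r)=0$, and Theorem \ref{thm:chucorrectedtechnical} concludes.
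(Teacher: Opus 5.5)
There is a genuine gap, and it lies in the choice of $r$, not in the bookkeeping. With $r(n)\approx n/(\log n)^{\mu}$ and $\mu=(1+\lambda)/2$, Step 1 only gives $\log(b_k/b_{k-r(k)})\gg(\log k)^{(\lambda-1)/2}$. For $1<\lambda<3$ this is $o(\log k)$, and no rearrangement of the sum can compensate.

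Indeed, for $1<\lambda<3$ let $b_n=\lfloor\exp((\log n)^{\lambda+1})\rfloor$ for all large $n$; this satisfies the hypothesis. One computes $b_{k-r(k)}/b_k=\exp\bigl(-(\lambda+1+o(1))(\log k)^{(\lambda-1)/2}\bigr)$, which eventually exceeds $k^{-1/2}$, while $(b_{k+1}-b_k)/b_k\asymp(\log k)^{\lambda}/k$. Hence $b_{k+1}<b_k+b_{k-r(k)}$ for all large $k$, the intervals defining $E_r$ chain together, and $E_r$ is cofinite. So item \ref{item:2} fails outright for your $r$.

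This failure sits exactly in the ``delicate point'' you leave unverified, the top block $k\in(n/2,n]$. Your tail bound $\sum_{m<k\le n}b_k\eta_k\le b_n\max_{k>m}\eta_k$ is also wrong as written: it forgets that there are $r(n)$ summands. In the example above the left-hand side is of order $b_n\eta_n\,n/(\log n)^{\lambda}$, which is not $o(b_n)$. Your dyadic treatment of $k\le n/2$ is correct (the paper does it in one line via $n\,b_{\lfloor n/2\rfloor}=o(b_n)$), but that range was never the problem.

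The fix is to take $r$ \emph{larger}: $r(n)=\lfloor n/(\log n)^{a}\rfloor$ with $0<a<\lambda-1$. Then the gain over the top block is $(\log n)^{\lambda-a}$ with $\lambda-a>1$, which beats the factor $n$ coming from the number of summands. The paper uses $a=(\lambda-1)/2$. For $n/2<k\le n$ one has $r(k)\gg n/(\log n)^a$, hence $b_{k-r(k)}\le b_{n-cn/(\log n)^a}$. Summing the hypothesis gives $\log\bigl(b_n/b_{n-cn/(\log n)^a}\bigr)\gg(\log n)^{(\lambda+1)/2}$, so $n\,b_{n-cn/(\log n)^a}=o(b_n)$, which disposes of the whole top block at once.

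In your aside you were one sign away from this. The choice $r(n)=n\,\omega(n)/(\log n)^{\lambda-1}$ works with $\omega\to\infty$ slowly (and $\omega=o((\log n)^{\lambda-1})$ to keep $r(n)=o(n)$), not with $\omega\to0$.
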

In fact, Theorem \ref{thm:lastmainthm} includes cases such as 
$B=\{\lfloor e^{n^\theta}\rfloor: n \in \mathbb{N}^+\}$ with $\theta \in (0,1)$, or 
$B=\{\lfloor e^{(\log n)^\beta}\rfloor: n \in \mathbb{N}^+\}$ with $\beta>2$.

On a similar direction, we provide an analogous result based only a sufficiently small growth of $B$ (which includes, in particular, $|B\cap [1,n]|\ll \log(n)^{1-\varepsilon}$ as $n\to \infty$ for some $\varepsilon>0$, as in the abstract). On the other extreme case, the requested set $A$ might have the same asymptotic density as $A+B$. 
\begin{thm}\label{thm:logarithmic}
Let $B\subseteq\mathbb{N}^+$ be an infinite set and suppose that 
$$
\limsup_{n\to\infty}
\frac{|B\cap[1,n]|\log\log n}{\log n}<1.
$$
Then for each $\alpha\in[0,1]$ there exists a set $A\subseteq\mathbb{N}^+$ such that
$\mathsf{d}(A)=\mathsf{d}(A+B)=\alpha$.
\end{thm}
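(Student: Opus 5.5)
The plan is to make $A$ almost closed under adding elements of $B$, so that $A+B$ and $A$ differ only by a set of density zero. Let $P$ be the additive semigroup generated by $B$, together with $0$. In other words, $P$ is the set of all finite sums (with repetition) of elements of $B$, including the empty sum. I will take
$$
A:=C+P
$$
for a suitable set $C\subseteq\mathbb{N}^+$ with $\mathsf{d}(C)=0$. Then $A+B\subseteq C+P+B\subseteq A$. Conversely, every element of $A\setminus C$ has the form $c+p$ with $p\in P\setminus\{0\}$, hence lies in $A+B$. So $A\setminus (A+B)\subseteq C$, and therefore $\mathsf{d}(A+B)=\mathsf{d}(A)$ whenever either density exists. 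The case $\alpha=0$ is handled separately by taking $A=P$ itself, which has density $0$ by the next step. The case $\alpha=1$ is trivial.

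\textbf{Step 1 (counting $P$).} This is where the hypothesis enters. Fix $c<1$ with $|B\cap[1,n]|\le c\log n/\log\log n$ for large $n$. Every element of $P\cap[1,n]$ is a sum $\sum_{b\in B\cap[1,n]} m_b b$ with each multiplicity $m_b\le n/b\le n$. A cruder but sufficient count uses $m_b\le \log n$ for large $b$ and treats the finitely many small elements of $B$ separately. This should give
$$
|P\cap[1,n]|\le (\log n)^{O(1)+|B\cap[1,n]|}\le n^{c+o(1)}.
$$
The small elements $b$ need a little care. I would split $B\cap[1,n]$ into those $b\le (\log n)^{K}$, whose number is $O(\log\log n/\log\log\log n)$ and whose multiplicities contribute at most $n^{o(1)}$ choices, and the rest. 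The aim is the bound $|P\cap[1,n]|\le n^{c'}$ for some $c'<1$.

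\textbf{Step 2 (random $C$).} Put each $j$ into $C$ independently with probability $p_j=\min\{1,\,t/g(j)\}$. Here $t>0$ is a parameter and $g$ is a slowly varying normalisation comparable to the counting function of $P$, so that $p_j\to 0$ and hence $\mathsf{d}(C)=0$ almost surely. An integer $m$ lies in $A$ with probability
$$
1-\prod_{p\in P,\,p<m}\bigl(1-p_{m-p}\bigr)\approx 1-e^{-S_m},\qquad S_m:=\sum_{p\in P,\,p<m}p_{m-p}.
$$
For concentration I use the second moment. The events $\{m\in A\}$ and $\{m'\in A\}$ are dependent only when both involve a common element of $C$. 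The number of such pairs in $[1,n]$ is at most about $n\,|P\cap[1,n]|^2$, and each carries weight about $\max_j p_j$. This yields $\mathrm{Var}\,|A\cap[1,n]|\ll n^{1+c'}$. Chebyshev then gives deviation probability $n^{c'-1}$, which is summable along $n=\lfloor(1+\delta)^k\rfloor$. Borel--Cantelli and monotonicity of $|A\cap[1,n]|$ transfer this to all $n$ as $\delta\to0$. Hence almost surely $|A\cap[1,n]|/n$ tracks the mean $\frac1n\sum_{m\le n}\bigl(1-e^{-S_m}\bigr)$.

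\textbf{Step 3 (hitting $\alpha$ exactly; the main obstacle).} I need the mean density $\frac1n\sum_{m\le n}(1-e^{-S_m})$ to converge, and its limit $F(t)$ to vary continuously from $0$ to $1$ as $t$ ranges over $(0,\infty)$. Then the intermediate value theorem gives $F(t)=\alpha$. The difficulty is that $S_m$ is not uniform in $m$. The terms with $m-p$ small carry the large weights $p_{m-p}$, and $P$ is irregular. My first attempt is to choose $g$ so that $S_m$ is dominated by the contribution of the $p\le m^{1/2}$, say. For $p$ close to $m$ the count of $P$ near $m$ is at most $m^{c'}$, while for typical $m$ the points $m-p$ with $p\in P$ near $m$ are few. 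I then hope to show that $S_m\to t$ for all $m$ outside a density-zero set.

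If this fails, the fallback is to truncate. I would use $C+P_L$ with $P_L:=P\cap[1,L(m)]$ for a slowly growing $L$ in the random design, which makes the coverage probabilities uniform and equal to about $1-e^{-t}$. I would keep $A=C+P$ for the closure property, and check separately that the extra part $(C+P)\setminus(C+P_L)$ has density zero. That check again uses $|P\cap[1,n]|\le n^{c'}$ together with $p_j\to0$.
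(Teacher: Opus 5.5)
Your reduction is sound: with $A=C+P$ you do get $A+B\subseteq A$ and $A\setminus(A+B)\subseteq C$. The failure is in Step 1. The claimed bound is false, and this breaks the construction itself, not just the estimates. The additive semigroup generated by $B$ is never sparse. If $g=\gcd(B)$, some finite $B_0\subseteq B$ already has $\gcd(B_0)=g$. The semigroup generated by $B_0$ contains every sufficiently large multiple of $g$ (Frobenius coin problem), while $P\subseteq g\mathbb{N}$. Hence $\mathsf{d}(P)=1/g$. For any nonempty $C$, the set $C+P$ eventually coincides with the union of the residue classes modulo $g$ that meet $C$, so $\mathsf{d}(A)\in\{1/g,2/g,\ldots,1\}$ however $C$ is chosen.

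For instance, $B=\{1\}\cup\{2^{2^n}:n\in\mathbb{N}\}$ satisfies the hypothesis, yet $P=\mathbb{N}$ and every $A=C+P$ is cofinite. Your choice $A=P$ for $\alpha=0$ fails for the same reason; take $A=\{1\}$ instead, using $\mathsf{d}(B)=0$. The counting went wrong because $m_b\le\log n$ holds only for $b\ge n/\log n$. Every smaller $b$, not just those below $(\log n)^K$, can have multiplicity up to $n/b$. The honest count $\prod_{b\in B\cap[1,n]}(1+\lfloor n/b\rfloor)$ of multiplicity vectors is superpolynomial in general. Steps 2 and 3 (and your fallback, which still keeps $A=C+P$) rest on $|P\cap[1,n]|\le n^{c'}$, so nothing survives. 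Step 3 was in any case left as a hope.

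The missing idea is to replace exact closure by approximate invariance. You want $\mathsf{d}(A)=\alpha$ and $\mathsf{d}((A+B)\setminus A)=0$, which is achievable only because density-zero defects are allowed. The paper's construction runs as follows.
\begin{enumerate}
\item Translate to $D=B-\min B\ni 0$, so that $C\subseteq C+D$.
\item Use the growth hypothesis to find $\ell_j\le r_j\le \ell_j^\lambda$ in $D$ with $D\cap(r_j,\ell_{j+1})=\emptyset$ and $\log(\ell_{j+1}/r_j)-\log\log\ell_{j+1}\to\infty$. Then $D_j=D\cap[0,r_j]$ has $k_j=O(\log\ell_j/\log\log\ell_j)$ elements.
\item For each $D_j$, apply Dirichlet's pigeonhole principle on $\mathbb{T}^{k_j}$ to get $u$ and $q_j$ with $\|uf/q_j\|\le 1/Q_j$ for all $f\in D_j$.
\item Take the Bohr set $P_j=\{n: un \bmod q_j\in[0,\alpha q_j)\}$. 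It has density about $\alpha$, and $(P_j+D_j)\setminus P_j$ has relative density $O(1/Q_j)$.
\item The hypothesis on $B$ is what forces $q_j\le 4Q_j^{k_j+1}=\ell_j^{o(1)}$.
\item Glue the sets $P_j$ on blocks $[t_j,t_{j+1})$ with $r_{j-1}=o(t_j)$ and $t_j=o(\ell_j)$. On each block, only $D_j$ acts, up to negligible boundary terms.
\end{enumerate}
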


Lastly, it is natural to ask whether the analogous results of Theorem \ref{thm:grekos} and Theorem \ref{thm:logarithmic} hold for \emph{all} sets $B\subseteq \mathbb{N}^+$ with $\mathsf{d}(B)=0$. Below, in our last main result, we provide a strong negative answer. 
\begin{thm}\label{thm:counterexample}
For each $\varepsilon>0$, there exists an infinite set $B\subseteq \mathbb{N}^+$ such that
$$
|B\cap [1,n]|=O(n^{2/3+\varepsilon})
\quad \text{ as }n\to \infty,
$$
and $\mathsf{d}(A+B)\in \{0,1\}$ for every set $A\subseteq \mathbb{N}^+$ such that $A+B$ admits asymptotic density.
\end{thm}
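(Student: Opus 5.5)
The plan is to build $B$ as a union of very sparse ``blocks'' placed at rapidly increasing scales. Each block is designed so that any $A$ for which $A+B$ has positive density is forced to have $A+B$ cover almost all of a long interval at infinitely many scales. Then $\mathsf{d}(A+B)=\alpha>0$ gives upper density $1$ along a subsequence, hence $\alpha=1$ because the limit exists. Concretely, I would fix $\varepsilon>0$ and a very fast increasing sequence $(N_k)$ with $N_{k+1}\ge N_k^{k}$. At scale $N_k$ I would put
$$
B_k:=\bigl(H_k+[0,m_k)\bigr)\cup P_k\subseteq [N_k,2N_k],
$$
where $m_k\approx N_k^{1/3}$, $P_k$ is a sparse set of multiples of $m_k$, and $H_k$ is a set chosen by the probabilistic method with $|H_k|\approx N_k^{1/3+\varepsilon}$. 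The exponents are tuned so that $|B_k|\ll N_k^{2/3+\varepsilon}$. The sparsity of the scales then gives $|B\cap[1,n]|=O(n^{2/3+\varepsilon})$ for all $n$, including $n$ between $N_k$ and $N_{k+1}$.

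The key step is a covering lemma. Suppose $\mathsf{d}(A+B)=\alpha>0$, and take $n$ with $N_{k-1}\ll n\le N_k^{1/3}$. Since $B\cap[1,n]$ is negligibly small, the set $A\cap[1,n]$ cannot be too sparse in the ``coarse'' sense. Precisely, it must meet at least $\gg \alpha\, n/(|B\cap[1,n]|+1)$ distinct intervals of length $m_k$, or at least a positive proportion of residue classes modulo $m_k$. Adding the interval part $[0,m_k)$ then turns $A\cap[1,n]$ into a union of many full windows, so it fills a positive proportion $\beta\gg\alpha$ of $[1,n]$ in blocks. Next I would use that $H_k$ is random of the stated size and apply a union bound over the at most $2^{O(n/m_k)}$ possible window patterns. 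With high probability, for every pattern $W$ of density at least $\beta$, the set $W+H_k$ covers all but a $o(1)$ proportion of $[N_k+n,2N_k]$. A Chernoff estimate shows that a fixed target $t$ is missed with probability about $(1-\beta)^{|H_k|\,n/N_k}$. The parameters are chosen so that this beats the entropy $2^{n/m_k}$, and this is exactly where the exponent $2/3$ comes from: balancing $m_k$, $|H_k|$ and $|H_k|\cdot m_k$.

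Combining these, $A+B\supseteq (A\cap[1,n])+B_k$ has at least $(1-o(1))N_k$ elements in $[1,2N_k]$. Hence $|(A+B)\cap[1,2N_k]|/(2N_k)\ge \tfrac12-o(1)$, and a slightly more careful version of the construction, with $B_k$ spread over $[N_k,C N_k]$ for $C$ large, pushes this ratio to $1-o(1)$. Since the density exists, $\alpha=1$. If instead $A+B$ has upper density $0$, we are in the other case, so $\mathsf{d}(A+B)\in\{0,1\}$.

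I expect the main obstacle to be the first reduction. I need to show rigorously that positive density of $A+B$, rather than of $A$, forces $A\cap[1,n]$ to be spread over many $m_k$-windows at a scale $n$ compatible with $N_k$. The difficulty is that $A$ may be extremely sparse, with the density of $A+B$ produced by the earlier blocks $B_j$, $j<k$. I would handle this by choosing $n$ in the gap $(N_{k-1}^{C},N_k^{1/3})$. In that range the earlier blocks have total size $o(n)$ relative to the window count, so each element of $A$ contributes at most $|B\cap[1,n]|$ elements to $A+B$ in $[1,n]$. Therefore $|A\cap[1,n]|\gg \alpha n/|B\cap[1,n]|$, and this quantity must exceed the number of windows needed in the union bound. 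If this counting fails for the chosen exponents, I would fall back on letting $B_k$ itself contain a scaled copy of the earlier structure, so that $A+B\supseteq (A+B_{k-1})+B_k$. The density of the known dense set $A+B_{k-1}$ can then be fed directly into the covering lemma.
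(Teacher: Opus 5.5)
\textbf{There is a genuine gap: the covering step fails for your parameters.} Your ``first reduction'' is not the real obstacle. It is exactly what the paper uses: if $B\cap[1,n]$ is a fixed finite set of size $s$ (the earlier blocks), then $|A\cap[1,n]|\ge |(A+B)\cap[1,n]|/s$, which is linear in $n$.

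The covering step is where things break. You take $n\le N_k^{1/3}\approx m_k$, so $[1,n]$ meets at most two $m_k$-windows, and there are no window patterns to union-bound over. Worse, $(A\cap[1,n])+[0,m_k)\subseteq[1,2m_k)$. Hence $(A\cap[1,n])+H_k+[0,m_k)$ has at most $2m_k|H_k|\approx N_k^{2/3+\varepsilon}$ elements whatever $H_k$ is, and this is $o(N_k)$ in the relevant range $\varepsilon<1/3$. Equivalently, the exponent in your miss probability $(1-\beta)^{|H_k|n/N_k}$ is at most $N_k^{\varepsilon-1/3}\to 0$, so almost every target is missed.

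The other part, $P_k\subseteq m_k\mathbb{Z}$, only reaches the residues of $A\cap[1,n]$ modulo $m_k$. It therefore covers a proportion at most $|A\cap[1,n]|/m_k$ of $[N_k,2N_k]$, which is of order $\alpha$, not $1-o(1)$.

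Moving to $n\gg m_k$ does not rescue this either:
\begin{itemize}
\item A per-target Chernoff bound says nothing about the probability that a fixed pattern leaves many targets uncovered, and that probability is what must beat $2^{n/m_k}$.
\item With the natural McDiarmid bound (one element of $H_k$ affects at most $|W|\le n$ targets), the union bound needs a block of size at least of order $m_k^{1/3}N_k^{2/3}$. So the interval part only costs you, and the best choice is $m_k=1$.
\end{itemize}
Your fallback $A+B\supseteq(A+B_{k-1})+B_k$ is unjustified, since it would require $B_{k-1}+B_k\subseteq B$.

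\textbf{The missing idea.} Make the block a plain random set, and union-bound over all subsets of $[1,m]$ of the forced size. In the paper, once $B_{j-1}$ (of size $s_{j-1}$) is fixed, one takes $m_j$ huge. If $\mathsf{d}(A+B)>0$, then for large $j$ one has $|(A+B)\cap[1,m_j]|\ge m_j/j$; this also removes the dependence on the unknown $\alpha$. Hence $|A\cap[1,m_j]|\ge k_j:=\lfloor m_j/(js_{j-1})\rfloor$.

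The block $D_j\subseteq[h_j+1,h_j+\ell_j]$ is Bernoulli with density $p_j\approx \log(4j)/k_j$. So a given target misses $F+D_j$ with probability $(1-p_j)^{k_j}\le 1/(4j)$. McDiarmid with Lipschitz constant $k_j$, against the $2^{m_j}$ choices of $F\subseteq[1,m_j]$, needs $\ell_j\gtrsim m_jk_j^2j^2\approx m_j^3/s_{j-1}^2$. Then $|D_j|\approx p_j\ell_j\approx m_j^2\approx \ell_j^{2/3}$, up to factors depending only on $j$ and $s_{j-1}$. This balance, not a window balance, is the source of the exponent $2/3$.

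Placing the block at $h_j=m_j^2\le \ell_j/j$ makes the covered stretch $[h_j+m_j+1,h_j+\ell_j]$ almost all of $[1,h_j+\ell_j]$, which forces upper density $1$. Finally, a prefix bound $|D_j\cap[h_j+1,h_j+t]|\le 2p_jt+p_j^{-1}\log(8\ell_j)$, secured in the same random construction, keeps $|B\cap[1,n]|=O(n^{2/3+\varepsilon})$ for $n$ inside the block.
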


It is worth noting that 
Theorems \ref{thm:logarithmic} and \ref{thm:counterexample} answer \cite[Question 3]{MR4197427}: the former gives a sufficient condition on $B$, while the latter supplies a zero-density counterexample. Theorem \ref{thm:counterexample} also answers negatively the first parts of \cite[Questions 1.1 and 1.6]{MR4414919}, with equal prescribed lower and upper densities in the latter question. 
The proofs of our results follow in the next sections.

\section{Proof of Theorem \ref{thm:chucorrectedtechnical}}

Define 
$$
\forall n \in \mathbb{N}^+, \quad \quad 
p_n:=1-(1-\alpha)^{1/\kappa(n)},
$$
where 
$$
\kappa(n):=|B\cap [1,n]|.
$$ 
We use the convention that $1/0:=0$ and $0^0:=1$, so that $p_n=0$ for all $n<\min B$.  

We start by proving that the existence of the map $r$ satisfying items \ref{item:0}--\ref{item:2} implies that the elements of $B$ grow superpolynomially. 

\begin{prop}\label{prop:012-imply-stronger-growth}
For each $\varepsilon>0$, we have
$$
\kappa(N)=o_\varepsilon\!\left(N^\varepsilon\right)
\qquad\text{as }N\to\infty.
$$
In particular, $\mathsf{d}(B)=0$. 
\end{prop}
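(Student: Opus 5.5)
The plan is to turn the hypothesis $\mathsf{d}(E_r)=0$ into a pointwise statement: the ratio $b_k/b_{k-r(k)}$ tends to infinity. Then iterate the map $k\mapsto k-r(k)$, which by item \ref{item:1} lowers the index only by a small fraction at each step. This will force $b_n$ to grow faster than any fixed power of $n$, which is the same as $\kappa(N)=o_\varepsilon(N^\varepsilon)$.

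\emph{Step 1.} I claim that $b_{k-r(k)}/b_k\to 0$ as $k\to\infty$; note that $k-r(k)\ge 1$ by item \ref{item:0}. Suppose instead that for some $\delta\in(0,1)$ there are infinitely many $k$ with $b_{k-r(k)}>\delta b_k$. For each such $k$, the interval $[b_k,b_k+b_{k-r(k)})$ lies in $E_r$. Its portion inside $[1,2b_k]$ contains at least $\min(b_{k-r(k)},b_k)\ge \delta b_k$ integers, since $b_{k-r(k)}\le b_k$. Hence
\[
\frac{|E_r\cap[1,2b_k]|}{2b_k}\ge \frac{\delta}{2}
\]
for infinitely many $k$, which contradicts item \ref{item:2}.

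\emph{Step 2 (iteration).} Fix $\delta\in(0,1)$ and $\eta\in(0,1/2)$. By Step 1 and item \ref{item:1}, there is $K$ such that for all $k\ge K$ we have $b_{k-r(k)}\le \delta b_k$ and $r(k)\le \eta k$. Given $n\ge K$, set $k_0:=n$ and $k_{j+1}:=k_j-r(k_j)$ as long as $k_j\ge K$. Since $r\ge 1$, the sequence strictly decreases, so it eventually drops below $K$. Let $J$ be the number of steps taken while staying at or above $K$. Each such step satisfies $k_{j+1}\ge(1-\eta)k_j$, so $(1-\eta)^J n< K$. This gives
\[
J\ge \frac{\log(n/K)}{\log(1/(1-\eta))}.
\]
Each of these steps divides $b$ by at least $1/\delta$, so
\[
b_n\ge \delta^{-J}b_{k_J}\ge \left(\frac{n}{K}\right)^{\gamma},\qquad \gamma:=\frac{\log(1/\delta)}{\log(1/(1-\eta))}.
\]
Choosing $\delta$ small makes $\gamma$ as large as we like. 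So for every $M$ we get $n\le C_M\, b_n^{1/M}$ for all large $n$.

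\emph{Step 3 (conclusion).} Given $N$, take $n=\kappa(N)$, so that $b_n\le N$. Then $\kappa(N)\le C_M N^{1/M}$. Choosing $M>2/\varepsilon$ gives $\kappa(N)=O(N^{\varepsilon/2})=o(N^\varepsilon)$. Taking $\varepsilon<1$ yields $\kappa(N)=o(N)$, that is, $\mathsf{d}(B)=0$.

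The main obstacle is Step 1, the passage from zero density of the union $E_r$ to a pointwise bound on each ratio. The choice of the window $[1,2b_k]$ handles it, because it contains a large piece of a single interval of $E_r$ and so needs no disjointness among the intervals. The rest is bookkeeping, where the only care needed is that the iteration stays in the range $k\ge K$ where both bounds hold.
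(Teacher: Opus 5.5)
Your proof is correct and follows the same strategy as the paper. Both arguments first extract $b_{k-r(k)}=o(b_k)$ from $\mathsf{d}(E_r)=0$ by looking at a single interval of $E_r$ (you use the window $[1,2b_k]$, the paper uses $[1,b_k+b_{k-r(k)}-1]$), and then iterate $k\mapsto k-r(k)$ and count steps to get $\log b_n/\log n\to\infty$. The difference is only in bookkeeping: the paper ties both parameters to one $M$ ($r(k)\le k/(2M)$ and $\log(b_k/b_{k-r(k)})\ge M$) and counts at least $M$ steps per dyadic halving, whereas you keep $\eta$ fixed, send $\delta\to0$, and count all steps in one pass via $(1-\eta)^Jn<K$, which incidentally shows that item \ref{item:1} is only needed in the weaker form $\limsup_k r(k)/k<1$.
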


\begin{proof}
Set $N_k:=b_k+b_{k-r(k)}-1$ for all $k\ge 2$. Since $r$ satisfies items \ref{item:0}--\ref{item:2}, we obtain
$$
\frac{b_{k-r(k)}}{b_k+b_{k-r(k)}-1}
\le \frac{|E_r\cap[1,N_k]|}{N_k}=o(1)
\qquad\text{as }k\to\infty.
$$
This implies that $b_{k-r(k)}=o(b_k)$ as $k\to \infty$. Define
$$
\forall k\ge 2, \qquad g(k):=\log b_k,
$$
and notice that
$g(k)-g(k-r(k))=\log\!\left(\frac{b_k}{b_{k-r(k)}}\right)\to \infty$
as $k\to \infty$.

Fix an integer $M\ge 2$. By the previous observation and since $r(k)=o(k)$, there exists $k_M\in\mathbb N^+$ such that
\begin{equation}\label{eq:twoestimates}
\forall k\ge k_M, \qquad
r(k)\le \frac{k}{2M}
\quad\text{and}\quad
g(k)-g(k-r(k))\ge M.
\end{equation}

Now, fix an integer $n_0\ge 2k_M$, and define recursively
$$
n_{j+1}:=n_j-r(n_j)
$$
until the first index $t$ for which $n_t<n_0/2$. Since
$r(n_j)\le n_j/(2M)\le n_0/(2M)$ by \eqref{eq:twoestimates}, fewer than $M$ steps cannot decrease the value from $n_0$ to below $n_0/2$. Hence $t\ge M$. Moreover, $n_{t-1}\ge n_0/2$, and therefore, using \eqref{eq:twoestimates} at each step,
\begin{displaymath}
\begin{split}
g(n_0)-g(\lfloor n_0/2\rfloor)
&\ge g(n_0)-g(n_{t-1})\\
&=\sum_{j=0}^{t-2}\bigl(g(n_j)-g(n_{j+1})\bigr)\ge (t-1)M\ge M(M-1).
\end{split}
\end{displaymath}
Iterating this inequality along dyadic scales, we find that for every integer $J\ge 1$ such that $n_0/2^J\ge 2k_M$,
\begin{displaymath}
\begin{split}
g(n_0)
&\ge \sum_{j=0}^{J-1}\Bigl(g\!\left(\Bigl\lfloor\frac{n_0}{2^j}\Bigr\rfloor\right)
-g\!\left(\Bigl\lfloor\frac{n_0}{2^{j+1}}\Bigr\rfloor\right)\Bigr)\ge J\,M(M-1).
\end{split}
\end{displaymath}
Choosing
$J=\left\lfloor \log_2\!\left(n_0/(2k_M)\right)\right\rfloor$,
we conclude that
$$
g(n_0)\ge M(M-1)
\left\lfloor \log_2\!\left(n_0/(2k_M)\right)\right\rfloor
$$
for all sufficiently large $n_0$. Since the integer $M\ge2$ is arbitrary, this proves that
$$
\lim_{k\to \infty}\frac{g(k)}{\log k}=\infty.
$$

Now, fix $\varepsilon>0$. By the above limit, 
$$
\varepsilon g(n)-\log n\longrightarrow\infty
\qquad\text{as }n\to\infty,
$$
and therefore
$$
\frac{n}{b_n^\varepsilon}
=
\exp\bigl(\log n-\varepsilon g(n)\bigr)
\longrightarrow 0
\qquad\text{as }n\to\infty.
$$
Thus $n=o(b_n^\varepsilon)$ as $n\to\infty$. Finally, if $b_n\le N<b_{n+1}$, then $\kappa(N)=n$, and hence
$$
\frac{\kappa(N)}{N^\varepsilon}
=
\frac{n}{N^\varepsilon}
\le
\frac{n}{b_n^\varepsilon}
\longrightarrow 0
\qquad\text{as }N\to\infty.
$$
This concludes the proof.
\end{proof}

We split the rest of the proof into three cases.

\medskip 

\textbf{Case $\bm{\alpha=0}$.} It is enough to choose $A=\{1\}$, so that $\mathsf{d}(A)=0$ and, thanks to Proposition \ref{prop:012-imply-stronger-growth}, $\mathsf{d}(A+B)=\mathsf{d}(B)=0$. 

\medskip

\textbf{Case $\bm{\alpha=1}$.} Since $B\subseteq \mathbb{N}^+$ is an infinite set, it is a well-known result by Lorentz (which solved a conjecture by Erd\"os) that there exists $A\subseteq \mathbb{N}^+$ such that $\mathsf{d}(A)=0$ and $A+B$ is cofinite, see \cite{MR63389}. In particular, $\mathsf{d}(A+B)=1$. 

\medskip

\textbf{Case $\bm{\alpha\in (0,1)}$.}  
Pick a probability measure space $(\Omega, \mathscr{F}, P)$ and independent Bernoulli random variables $(Y_n: n \in \mathbb{N}^+)$ such that $P(Y_n=1)=p_n$ for all $n \in \mathbb{N}^+$. Then, for each $\omega \in \Omega$, define 
$$
A(\omega):=\{n \in \mathbb{N}^+: Y_n(\omega)=1\}.
$$

For each $m \in \mathbb{N}^+$, let also $X_m$ be the Bernoulli random variable defined by
\begin{displaymath}
    X_m(\omega):=
    \begin{cases}
        \,1\,\,\,\,& \text{ if }m \in A(\omega)+B;\\
        \,0\,\,\,\,& \text{ otherwise}.
    \end{cases}
\end{displaymath}
It follows that 
\begin{equation}\label{eq:meanY_m}
\mathbb{E}X_m=1-\prod_{j:\, b_j<m}(1-p_{m-b_j}).
\end{equation}
Indeed, the integers $m-b_j$ are pairwise distinct, and the event $(m \in A+B)$ holds exactly when at
least one of those numbers belongs to $A$.

For each integer $k\ge 2$, define 
$$
\alpha_k:=1-(1-\alpha)^{\frac{k}{k-r(k)}},
$$
and note that $\lim_k \alpha_k=\alpha$ by item \ref{item:1}. 

\begin{lem}\label{lem:basicestimate}
    Pick $m,k \in \mathbb{N}^+$ such that $k\ge 2$ and $b_k+b_{k-r(k)}\le m<b_{k+1}$. Then 
    $$
    \alpha \le \mathbb{E}X_m\le \alpha_k. 
    $$
\end{lem}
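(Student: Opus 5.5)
Write $q:=1-\alpha\in(0,1)$, so that $1-p_n=q^{1/\kappa(n)}$ for every $n\ge \min B$. By \eqref{eq:meanY_m},
$$
1-\mathbb{E}X_m=\prod_{j:\, b_j<m} q^{1/\kappa(m-b_j)}
= q^{S_m},
\qquad
S_m:=\sum_{j:\, b_j<m}\frac{1}{\kappa(m-b_j)},
$$
with the convention $1/0=0$, so indices $j$ with $m-b_j<\min B$ contribute nothing. Since $t\mapsto q^t$ is decreasing, the claim $\alpha\le \mathbb{E}X_m\le \alpha_k$ is equivalent to
$$
1\le S_m\le \frac{k}{k-r(k)}.
$$
The plan is to prove these two inequalities separately, using only the position of $m$ relative to $b_k$, $b_{k+1}$ and $b_{k-r(k)}$.

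\emph{Identifying the terms.} Since $m<b_{k+1}$, the indices with $b_j<m$ are exactly $j=1,\ldots,k$, because $b_k\le m$ under the hypothesis. For each such $j$ we have $m-b_j\le m-b_1<b_{k+1}$, hence $\kappa(m-b_j)\le k$. For the lower bound this already suffices. Every nonzero term is at least $1/k$, so it remains to show that at least $k$ terms are nonzero, i.e.\ $m-b_j\ge b_1$ for all $j\le k$. This is the only delicate point. Take $j=k$: we have $m-b_k\ge b_{k-r(k)}\ge b_1$, using $k-r(k)\ge 1$ from item~\ref{item:0}. For $j<k$ the bound follows because $m-b_j\ge m-b_k$. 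Hence $S_m\ge k\cdot\frac1k=1$.

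\emph{Upper bound.} Now we use $m-b_j\ge m-b_k\ge b_{k-r(k)}$ again, which gives $\kappa(m-b_j)\ge \kappa(b_{k-r(k)})=k-r(k)$ for every $j\le k$. Summing the $k$ terms then yields $S_m\le k/(k-r(k))$, so $1-\mathbb{E}X_m\ge q^{k/(k-r(k))}$, i.e.\ $\mathbb{E}X_m\le\alpha_k$.

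The main (small) obstacle is the bookkeeping: checking that every index $j\le k$ really contributes, with $\kappa(m-b_j)$ between $k-r(k)$ and $k$. Both facts come directly from the hypothesis $b_k+b_{k-r(k)}\le m<b_{k+1}$. The rest is monotonicity of $q^t$.
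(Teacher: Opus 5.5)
Your proof is correct and follows essentially the same route as the paper: both reduce to $k-r(k)\le\kappa(m-b_j)\le k$ for $j=1,\ldots,k$, then use that $t\mapsto(1-\alpha)^t$ is decreasing. One small nit: to have $j=k$ in the index set $\{j: b_j<m\}$ you need the strict inequality $b_k<m$, not just $b_k\le m$; it does hold, since $m-b_k\ge b_{k-r(k)}\ge 1$, as you effectively show later.
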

\begin{proof}
    Since $b_k<b_k+b_{k-r(k)}\le m<b_{k+1}$, we have  $\kappa(m)=k$. 
    In addition, for each $j=1,\ldots,k$, we obtain
    $
    b_{k-r(k)}\le m-b_k\le m-b_j<m<b_{k+1},
    $ 
    so that
    $$
    \forall j=1,\ldots,k, \quad 
    k-r(k)\le \kappa(m-b_j)\le k.
    $$
    This implies that 
    $
    \sum_{j=1}^k \frac{1}{\kappa(m-b_j)}\ge \sum_{j=1}^k \frac{1}{k}=1.
    $ 
    It follows by \eqref{eq:meanY_m} and the definition of the probabilities $p_n$ that
    \begin{displaymath}
            \mathbb{E}X_m
            =1-\prod_{j=1}^k (1-p_{m-b_j})
            =1-(1-\alpha)^{\sum_{j=1}^k \frac{1}{\kappa(m-b_j)}}\ge \alpha.
    \end{displaymath}
Similarly, since $\sum_{j=1}^k \frac{1}{\kappa(m-b_j)}\le \sum_{j=1}^k \frac{1}{k-r(k)}=\frac{k}{k-r(k)}$, we conclude $\mathbb{E}X_m \le \alpha_k$. 
\end{proof}

Now, observe that $E_r$ contains every integer $m$ in a block $[b_k,b_{k+1})$, with $k\ge2$, whose expectation $\mathbb{E}X_m$ is not covered by Lemma \ref{lem:basicestimate}; indeed,
$$
E_r=\bigcup_{k\ge 2}([b_k,b_k+b_{k-r(k)}) \cap \mathbb{N}^+).
$$
Recall by item \ref{item:2} that $\mathsf{d}(E_r)=0$. Define  
$$
\forall N \in \mathbb{N}^+, \quad S_N:=X_1+X_2+\cdots+X_N,
$$
so that $S_N(\omega)=|(A(\omega)+B) \cap [1,N]|$ for each $\omega \in \Omega$ and $N \in \mathbb{N}^+$. 

\begin{prop}\label{prop:ESN}
    $
    \mathbb{E}S_N=\alpha N+o(N)
    $ as $N\to \infty$. 
\end{prop}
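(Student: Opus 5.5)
By linearity of expectation we have $\mathbb{E}S_N=\sum_{m=1}^N \mathbb{E}X_m$, and we split the indices $m\in[1,N]$ into three classes: the finitely many $m<b_2$; the indices $m\in E_r\cap[1,N]$; and all remaining $m\le N$. The plan is to show that the first two classes contribute $o(N)$, while each term of the third class equals $\alpha+o(1)$ uniformly as $m\to\infty$. Since $0\le \mathbb{E}X_m\le 1$, the first class contributes $O(1)$. The second contributes at most $|E_r\cap[1,N]|=o(N)$ by item \ref{item:2}.

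For the third class, fix $m\ge b_2$ with $m\notin E_r$. Let $k\ge 2$ be the unique index with $b_k\le m<b_{k+1}$. Because $m\notin[b_k,b_k+b_{k-r(k)})$, we get $m\ge b_k+b_{k-r(k)}$, so Lemma \ref{lem:basicestimate} applies and gives $\alpha\le \mathbb{E}X_m\le \alpha_k$. As $m\to\infty$ we also have $k\to\infty$, and $\alpha_k\to\alpha$ by item \ref{item:1}. Hence for every $\eta>0$ there is $K$ such that $|\mathbb{E}X_m-\alpha|\le\eta$ whenever $m\ge b_K$ and $m$ lies in the third class.

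It remains to sum. Let $G_N$ denote the set of $m\le N$ in the third class. Since the three classes partition $[1,N]$, we have $|G_N|=N-O(1)-|E_r\cap[1,N]|=N-o(N)$. Therefore
$$
\Bigl|\mathbb{E}S_N-\alpha N\Bigr|\le \sum_{m\in G_N}|\mathbb{E}X_m-\alpha| + (N-|G_N|)\le b_K+\eta N+o(N).
$$
Dividing by $N$, letting $N\to\infty$ and then $\eta\to 0$ yields $\mathbb{E}S_N=\alpha N+o(N)$.

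No step here is a serious obstacle. The only point needing care is that Lemma \ref{lem:basicestimate} is applicable exactly on the complement of $E_r$ inside $[b_2,\infty)$. This holds because the blocks $[b_k,b_{k+1})$ for $k\ge 2$ partition $[b_2,\infty)$, and $E_r$ removes precisely the initial segment $[b_k,b_k+b_{k-r(k)})$ of each block, which is the segment not covered by the lemma. Beyond that, the argument uses only the uniform convergence of $\alpha_k$ to $\alpha$.
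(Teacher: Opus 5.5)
Your proposal is correct and follows essentially the same argument as the paper. Both split off the finite initial segment and the density-zero set $E_r$, then use Lemma \ref{lem:basicestimate} together with $\alpha_k\to\alpha$ on the remaining indices to get $|\mathbb{E}S_N-\alpha N|\le b_K+|E_r\cap[1,N]|+\eta N$.
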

\begin{proof}
    Fix $\varepsilon>0$. Since $\lim_k\alpha_k=\alpha$, there exists $k_0 \in \mathbb{N}^+$ such that $|\alpha_k-\alpha|<\varepsilon$ for all $k\ge k_0$. It follows by Lemma \ref{lem:basicestimate} that 
    $$
    \forall m \in (\mathbb{N}^+\setminus E_r)\cap [b_{k_0},\infty), \quad \quad 
    |\mathbb{E}X_m-\alpha|<\varepsilon.
    $$
    Taking into account that $X_m \in \{0,1\}$ for each $m \in \mathbb{N}^+$, this implies that 
    $$
    \forall N \in \mathbb{N}^+, \quad 
    |\mathbb{E}S_N-\alpha N| \le b_{k_0}+|E_r \cap [1,N]|+\varepsilon N.
    $$
    
    Since $\mathsf{d}(E_r)=0$, we obtain that $|\mathbb{E}S_N-\alpha N| \le 2 \varepsilon N$ for all sufficiently large $N \in \mathbb{N}^+$. The claim follows by the arbitrariness of $\varepsilon$. 
\end{proof}

\begin{lem}\label{lem:concentration}
    For all $\varepsilon>0$, there exists an integer $N_\varepsilon\ge 2$ such that 
    $$
    \forall N \ge N_\varepsilon, \quad 
    P\left(|S_N-\mathbb{E}S_N|>\varepsilon N\right)\le 
    \frac{1}{N^2}. 
    $$
\end{lem}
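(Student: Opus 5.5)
We prove the concentration bound with a fourth-moment (or higher-moment) argument for sums of weakly dependent indicators, using that each $X_m$ depends on few of the $Y_n$. The structure to exploit: $X_m$ is a function of $(Y_{m-b_j}: b_j<m)$, i.e.\ of at most $\kappa(m)\le \kappa(N)$ of the variables $Y_n$ with $n\le N$. Consequently $X_m$ and $X_{m'}$ are independent unless $m-m'\in B-B$, i.e.\ unless $m'=m-b_j+b_i$ for some $i,j\le \kappa(N)$. So in the dependency graph on $\{1,\dots,N\}$ each vertex has degree at most $D_N:=\kappa(N)^2$. By Proposition~\ref{prop:012-imply-stronger-growth}, $D_N=o(N^{\delta})$ for every $\delta>0$; this small degree is what makes the method work.

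\textbf{Step 1: centre and expand.} Set $Z_m:=X_m-\mathbb{E}X_m$, so $|Z_m|\le 1$, $\mathbb{E}Z_m=0$ and $S_N-\mathbb{E}S_N=\sum_{m\le N}Z_m$. We bound a high even moment $\mathbb{E}\bigl(\sum_{m\le N} Z_m\bigr)^{2s}$ for a fixed integer $s$ chosen depending on $\varepsilon$; in fact $s=2$ will already suffice up to adjusting $\delta$. In the expansion $\sum_{m_1,\dots,m_{2s}}\mathbb{E}[Z_{m_1}\cdots Z_{m_{2s}}]$, a term vanishes whenever some index $m_i$ is independent of the family formed by all the others. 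Here we need independence of $Z_{m_i}$ from the \emph{joint} vector of the others. This holds if the variable set $\{m_i-b_j\}$ is disjoint from the union of the variable sets of the others, since the $Y_n$ are independent.

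\textbf{Step 2: count surviving tuples.} Build the graph on positions $\{1,\dots,2s\}$, joining $i$ and $i'$ when $m_i-m_{i'}\in B-B$. In a nonvanishing tuple every vertex has a neighbour, so the graph has at most $s$ connected components. Choose a spanning forest: fix one free index per component ($\le N$ choices each), and each further index is determined by a neighbour up to $\le D_N$ choices. Hence the number of nonvanishing tuples is at most $C_s N^{s} D_N^{2s}$, and
$$
\mathbb{E}\Bigl(\sum_{m\le N}Z_m\Bigr)^{2s}\le C_s N^{s}\kappa(N)^{4s}.
$$

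\textbf{Step 3: apply Markov.} Markov's inequality gives
$$
P(|S_N-\mathbb{E}S_N|>\varepsilon N)\le \frac{C_s\kappa(N)^{4s}}{\varepsilon^{2s}N^{s}}.
$$
Take $s=3$ and use $\kappa(N)=o(N^{1/12})$ from Proposition~\ref{prop:012-imply-stronger-growth}. The bound is then $o(N^{-3+1})\cdot\varepsilon^{-6}$, which is $\le N^{-2}$ for $N\ge N_\varepsilon$; any $s\ge 3$ with a suitable exponent works.

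\textbf{Main obstacle.} The delicate point is Step 2. Pairwise independence does not suffice, so the vanishing criterion must be justified through disjointness of supports. Even a term with a component of size one may not factor if supports overlap through other indices, so we must define adjacency via overlapping supports, which is exactly $m_i-m_{i'}\in B-B$. We must also check that the component counting yields the exponent $N^{s}$ rather than something larger.
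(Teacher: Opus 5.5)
Your argument is correct, but it takes a different route from the paper. The paper views $S_N$ as a function of the independent $Y_1,\dots,Y_N$. It observes that changing a single $Y_n$ moves $S_N$ by at most $\kappa(N)$, and applies McDiarmid's bounded-differences inequality to get $P(|S_N-\mathbb{E}S_N|>\varepsilon N)\le 2\exp(-2\varepsilon^2N/\kappa(N)^2)$. Then $\kappa(N)=o(N^{1/4})$ from Proposition \ref{prop:012-imply-stronger-growth} turns this into $2N^{-4}\le N^{-2}$.

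You instead bound the $2s$-th central moment through a dependency graph. The supports of $X_m$ and $X_{m'}$ meet only if $m-m'=b-b'$ with $b,b'\in B\cap[1,N]$, so degrees are at most $\kappa(N)^2$ and $\mathbb{E}(S_N-\mathbb{E}S_N)^{2s}\le C_sN^s\kappa(N)^{4s}$. You then finish with Markov at $s=3$, using $\kappa(N)=o(N^{1/12})$.

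Your method is fully elementary (Markov plus counting), at the cost of a more delicate combinatorial step and only a polynomial tail. The paper's is a one-line appeal to a concentration inequality and gives an exponential tail. It would also work under the much weaker sparsity $\kappa(N)^2\log N=o(N)$, whereas your crude count at $s=3$ needs $\kappa(N)=o(N^{1/12})$. Here that difference is immaterial, since the Proposition gives $\kappa(N)=o(N^\delta)$ for every $\delta>0$.

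Two small points to fix in the write-up.

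\textbf{Adjacency relation.} In Step 2, and in your ``main obstacle'' paragraph, adjacency must be the restricted relation $m_i-m_{i'}\in\{b-b':b,b'\in B\cap[1,N]\}$ from your first paragraph, not membership in the full difference set $B-B$. For instance, $B=\{2^k,2^k+k:k\in\mathbb{N}^+\}$ satisfies the hypothesis of Theorem \ref{thm:exponential} with $q=2$, yet $B-B$ contains every integer. With the unrestricted relation, the degree bound $\kappa(N)^2$ would fail.

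\textbf{The case $s=2$.} Your parenthetical claim that $s=2$ already suffices is false for the inequality as stated. It yields $C_2\kappa(N)^8\varepsilon^{-4}N^{-2}$, which exceeds $N^{-2}$ once $\kappa(N)$ is large. That bound is summable, which would be enough for Proposition \ref{prop:concentration}, but it is not the lemma.

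Since you actually use the restricted relation for the degree count and take $s=3$, neither point affects the validity of your proof.
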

\begin{proof}
    Fix an integer $N\ge 2$. Observe that changing the value of a single $Y_m$ (i.e., whether $m \in A$ or not), for an index $m \in \{1,\ldots,N\}$, modifies the value of $S_N=|(A+B) \cap [1,N]|$ by at most $\kappa(N)=|B\cap [1,N]|$. Considering $S_N$ as a function of the random variables $Y_1,\ldots,Y_N$, it follows by McDiarmid's inequality \cite[Lemma 1.2]{MR1036755} that 
    $$
    P\left(|S_N-\mathbb{E}S_N|>\varepsilon N\right)\le 2\, \mathrm{exp}\left(-2 \frac{(\varepsilon N)^2}{N \cdot \kappa(N)^2}\right)
    =2\, \mathrm{exp}\left(-2\,\varepsilon^2 \frac{N}{\kappa(N)^2}\right).
    $$

    To conclude, it follows by Proposition \ref{prop:012-imply-stronger-growth} that $\kappa(N)=o(N^{1/4})$ as $N\to\infty$. Hence
$$
\frac{N}{\kappa^2(N)\log N}\longrightarrow\infty
\qquad\text{as }N\to\infty.
$$
Therefore, there exists an integer $N_\varepsilon\ge 2$ such that
$$
\frac{N}{\kappa^2(N)} \ge \frac{2}{\varepsilon^2} \log(N)
$$
for all $N\ge N_\varepsilon$. This implies that
$$
P\left(|S_N-\mathbb{E}S_N|>\varepsilon N\right)\le
2\, \mathrm{exp}\left(-4 \log(N)\right)=\frac{2}{N^4} \le \frac{1}{N^2}
$$
for all $N\ge N_\varepsilon$, which proves our claim.
\end{proof}

\begin{prop}\label{prop:concentration}
    We have $P$-almost surely that $S_N=\mathbb{E}S_N+o(N)$ as $N\to \infty$.
\end{prop}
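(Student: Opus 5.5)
The plan is a standard Borel--Cantelli argument built on Lemma \ref{lem:concentration}. Fix $\varepsilon>0$. For each $N\ge N_\varepsilon$ consider the event
$$
F_N^\varepsilon:=\left(|S_N-\mathbb{E}S_N|>\varepsilon N\right).
$$
Lemma \ref{lem:concentration} gives $P(F_N^\varepsilon)\le 1/N^2$. Since $\sum_N N^{-2}<\infty$, the first Borel--Cantelli lemma yields $P(\limsup_N F_N^\varepsilon)=0$. In other words, there is a set $\Omega_\varepsilon$ with $P(\Omega_\varepsilon)=1$ such that for every $\omega\in\Omega_\varepsilon$ there exists $N_0(\omega)$ with $|S_N(\omega)-\mathbb{E}S_N|\le \varepsilon N$ for all $N\ge N_0(\omega)$.

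To remove the dependence on $\varepsilon$, take $\varepsilon=1/j$ for $j\in\mathbb{N}^+$ and set $\Omega^\star:=\bigcap_{j}\Omega_{1/j}$. This is a countable intersection of full-measure events, so $P(\Omega^\star)=1$. Now fix $\omega\in\Omega^\star$ and $\eta>0$, and choose $j$ with $1/j<\eta$. Then $|S_N(\omega)-\mathbb{E}S_N|\le \eta N$ for all sufficiently large $N$. Hence $S_N(\omega)=\mathbb{E}S_N+o(N)$, which is the claim.

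The only delicate point is the passage from one fixed $\varepsilon$ to all $\varepsilon$ simultaneously, and the countable intersection over $\varepsilon=1/j$ handles it. The summability of the bound in Lemma \ref{lem:concentration} is what makes Borel--Cantelli applicable, and that bound is already available. Combined with Proposition \ref{prop:ESN}, this gives $\mathsf{d}(A(\omega)+B)=\alpha$ almost surely, which is how the statement will be used.
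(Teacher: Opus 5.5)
Your proposal is correct and matches the paper's proof: both apply the first Borel--Cantelli lemma to the summable bound from Lemma \ref{lem:concentration} for each $\varepsilon$ in a countable sequence tending to $0$ (the paper uses a decreasing sequence $\varepsilon_k$, you use $1/j$), and then intersect the resulting full-measure events.
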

\begin{proof}
Pick a decreasing sequence $(\varepsilon_k)_{k\ge 1}$ of positive reals with $\lim_k\varepsilon_k=0$. Now, fix $k \in \mathbb{N}^+$ and observe by Lemma \ref{lem:concentration} that 
$$
\sum_{N\ge 2} P\left(\frac{|S_N-\mathbb{E}S_N|}{N}>\varepsilon_k \right)\le N_{\varepsilon_k}+\sum_{N=2}^\infty \frac{1}{N^2}<\infty. 
$$
    It follows by 
    the first Borel-Cantelli lemma \cite[Theorem 4.18]{MR4226142} that 
    \begin{equation}\label{eq:varepsiloncondition}
    \limsup_{N\to \infty}\frac{|S_N-\mathbb{E}S_N|}{N}\le \varepsilon_k
    \quad P\text{-almost surely}. 
    \end{equation}

    Since Inequality \eqref{eq:varepsiloncondition} holds for all $k \in \mathbb{N}^+$, it follows that $\lim_N |S_N-\mathbb{E}S_N|/N=0$ $P$-almost surely.
\end{proof}

Putting together Proposition \ref{prop:ESN} and Proposition \ref{prop:concentration}, there exists an $\mathscr{F}$-measurable subset $\Omega_0 \subseteq \Omega$ such that $P(\Omega_0)=1$ and 
\begin{equation}\label{eq:finalclaim1}
\forall \omega \in \Omega_0, \quad 
\lim_{N\to \infty}\frac{|(A(\omega)+B) \cap [1,N]|}{N}
=\lim_{N\to \infty}\frac{S_N(\omega)}{N}
=\lim_{N\to \infty}\frac{\mathbb{E}S_N}{N}=\alpha. 
\end{equation}

Lastly, for each $N \in \mathbb{N}^+$ define the random variables $T_N: \Omega\to \mathbb{R}$ by 
$$
T_N:=Y_1+\cdots+Y_N,
$$
so that $T_N(\omega)=|A(\omega)\cap [1,N]|$ for all $\omega \in \Omega$. 
Since $B$ is infinite, we have $\kappa(n)\to\infty$, and therefore $\lim_n p_n=0$. This implies that $\mathbb{E}T_N=\sum_{n\le N}p_n=o(N)$ as $N\to \infty$. 

By Hoeffding's inequality \cite[Theorem 2]{MR144363}, we have 
$$
\forall k,N \in \mathbb{N}^+, \quad  
P\left(|T_N-\mathbb E T_N|>\varepsilon_k N\right)
    \le 2e^{-2\varepsilon_k^2N}.
$$
With the same reasoning above, since the right-hand side is summable in $N$, it follows by the first Borel-Cantelli lemma \cite[Theorem 4.18]{MR4226142} that $T_N=\mathbb E T_N+o(N)=o(N)$ $P$-almost surely. 
Hence there exists an $\mathscr{F}$-measurable subset $\Omega_1\subseteq \Omega$ such that $P(\Omega_1)=1$ and 
\begin{equation}\label{eq:finalclaim2}
\forall \omega \in \Omega_1, \quad 
\lim_{N\to \infty}\frac{|A(\omega)\cap [1,N]|}{N}
=\lim_{N\to \infty}\frac{T_N(\omega)}{N}
=\lim_{N\to \infty}\frac{\mathbb{E}T_N}{N}=0.
\end{equation}

To conclude the proof of Theorem \ref{thm:chucorrectedtechnical}, 
since $\Omega_0\cap \Omega_1$ is nonempty (as it is the intersection of two sets of full $P$-measure), 
it follows by \eqref{eq:finalclaim1} and \eqref{eq:finalclaim2} that there exists $A\subseteq \mathbb{N}^+$ such that $\mathsf{d}(A)=0$ and $\mathsf{d}(A+B)=\alpha$.


\section{Proof of Theorem \ref{thm:exponential}}

By hypothesis, it is possible to pick $q \in \mathbb{N}^+$ such that  
$$
c:=\frac{1}{2}\left(1+\liminf_{k\to \infty}\frac{b_{k+q}}{b_k}\right)>1.
$$
Hence, there exists $k_0 \in \mathbb{N}^+$ such that $b_{k+q}\ge cb_k$ for all $k\ge k_0$. 
As it follows by Theorem \ref{thm:chucorrectedtechnical}, it will be enough to show that there exists a function $r: \mathbb{N}^+ \to \mathbb{N}^+$ such that items \ref{item:0}--\ref{item:2} hold. 
To this aim, define 
$$
\forall n\in \mathbb N^+, \quad 
r(n):=\lfloor \sqrt n\rfloor.
$$
Clearly, $r(n)<n$ for all $n\ge 2$, and $r(n)=o(n)$ as $n\to\infty$, which shows items \ref{item:0}--\ref{item:1}. 

Now, pick $N,k\in \mathbb N^+$ such that $b_k\le N<b_{k+1}$. By repeated application of $b_{j+q}\ge cb_j$, for all sufficiently large $k$ we have
$$
b_k \ge c^{k/(3q)}b_{\lfloor k/2\rfloor}.
$$
Hence
$$
\sum_{2\le t\le k/2}b_{t-r(t)}
\le kb_{\lfloor k/2\rfloor}
\le \frac{k}{c^{k/(3q)}}\,b_k=o(b_k).
$$
Similarly, the sequence $(t-r(t))_{t\ge2}$ is nondecreasing. Therefore, for all sufficiently large $k$,
$$
\sum_{k/2<t\le k}b_{t-r(t)}
\le kb_{k-r(k)}
\le \frac{k}{c^{\lfloor r(k)/q\rfloor}}\,b_k
=o(b_k).
$$
Putting together the above estimates and recalling that $b_k\le N$, we obtain
\begin{equation}\label{eq:upperboundEr}
|E_r\cap [1,N]|
\le \sum_{2\le t\le k}b_{t-r(t)}
=o(N)
\qquad\text{as }N\to \infty.
\end{equation}
Therefore $\mathsf{d}(E_r)=0$ and item \ref{item:2} holds. This concludes the proof.


\section{Proof of Corollary \ref{cor:sumwithfinite}}

Set $q:=|C| \in \mathbb{N}^+$ and denote by $(x_n: n \in \mathbb{N}^+)$ the increasing enumeration of $B+C$. Taking into account that $\liminf_n b_{n+1}/b_n>1$ and that $B+C=\bigcup_{n}(\{b_n\}+C)$, it follows that $b_n+\max C<b_{n+1}+\min C$ for all large $n$, so that 
$$
\liminf_{n\to \infty} \frac{x_{n+q}}{x_n}\ge \liminf_{n\to \infty}\frac{b_{n+1}+\min C}{b_n+\min C}>1.
$$
The conclusion follows by Theorem \ref{thm:exponential}.


\section{Proof of Corollary \ref{cor:mixedfiniteunions}}

Discarding the finite sets among the $B_i$ does not affect the argument. Pick 
$c>1$ such that, for every infinite $B_i$, $b_{i,n+1}\ge c b_{i,n}$ for all sufficiently large $n$. Hence, for all sufficiently large $x$, each
$B_i$ contributes at most one element to the interval $[x,cx)$. Therefore $B$
contributes at most $q$ elements to $[x,cx)$. 
At this point, let $(b_n:n\in\mathbb N^+)$ be the increasing enumeration of $B$. By the above observation, we have $b_{n+q}\ge c b_n$ for all sufficiently large $n$. The conclusion follows by Theorem \ref{thm:exponential}.


\section{Proof of Theorem \ref{thm:lastmainthm}}

Set $a:=(\lambda-1)/2$ and define the map $r: \mathbb{N}^+\to \mathbb{N}^+$ by $r(1):=1$, $r(2):=1$, and 
$$
\forall n\ge 3, \qquad 
r(n):=\left\lfloor\frac{n}{(\log n)^a}\right\rfloor. 
$$
In particular, items \ref{item:0} and \ref{item:1} hold. 

Now, observe that, if $2\le k\le n/2$, then $b_{k-r(k)}\le b_{\lfloor n/2\rfloor}$. 
Furthermore,
$$
    \log\frac{b_n}{b_{\lfloor n/2\rfloor}}
    \gg \sum_{\lfloor n/2\rfloor\le j<n}
        \frac{(\log j)^\lambda}{j}
    \gg (\log n)^\lambda
    \quad \text{ as }n\to \infty. 
$$
Since $\lambda>1$, this implies $
    n b_{\lfloor n/2\rfloor}=o(b_n)$ as $n\to \infty$. 

Also, if $n/2<k\le n$, then $r(k)\gg n/(\log n)^a$, 
and hence $
    b_{k-r(k)}
    \le b_{n-c n/(\log n)^a}
$ 
for some constant $c>0$ and all large $n$. Therefore
$$
    \log\frac{b_n}{b_{n-c n/(\log n)^a}}
    \gg \sum_{n-c n/(\log n)^a\le j<n}
        \frac{(\log j)^\lambda}{j}
    \gg (\log n)^{\lambda-a}.
$$
Since $\lambda-a>1$, it follows that $n b_{n-c n/(\log n)^a}=o(b_n)$ as $n\to \infty$. 

Putting together the above estimates, we obtain 
$$
\sum_{2\le k\le n} b_{k-r(k)}=o(b_n)
\quad \text{ as }n\to \infty. 
$$
Reasoning as in \eqref{eq:upperboundEr}, this implies that $\mathsf{d}(E_r)=0$, hence item \ref{item:2} holds. 
The conclusion follows by Theorem \ref{thm:chucorrectedtechnical}.


\section{Proof of Theorem \ref{thm:logarithmic}}

For the cases $\alpha=0$ and $\alpha=1$ choose $A=\{1\}$ and $A=\mathbb{N}^+$, respectively. 
Hence, let us suppose hereafter that $\alpha\in (0,1)$. 

We start with the following elementary lemma.

\begin{lem}\label{lem:periodic}
Let $F\subseteq\mathbb{N}$ be a nonempty finite set, set $k:=|F|$, and fix $\alpha,\eta\in(0,1)$. Also, let $Q\ge 8/\eta$ be a power of $2$. 
Then there exist an integer $q \in [Q,4Q^{k+1}]$ and a $q$-periodic set $P\subseteq\mathbb{Z}$ such that
$$
\left|
\frac{|P\cap[0,q-1]|}{q}-\alpha
\right|
\le\frac1q
\quad\text{and}\quad
\frac{|((P+F)\setminus P)\cap[0,q-1]|}{q}\le\eta.
$$
\end{lem}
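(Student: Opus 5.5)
The plan is to take $P$ to be a Bohr-type set attached to a rational rotation, $P=\{x\in\mathbb{Z}: \{x a/q\}\in[0,\alpha)\}$, where $\{\cdot\}$ is the fractional part. The modulus $q$ and the residue $a$ will be chosen so that every element of $F$ acts as a very small rotation, i.e.\ $\|f a/q\|\le 1/Q$ for all $f\in F$, where $\|\cdot\|$ is the distance to the nearest integer. Such a $P$ is automatically $q$-periodic. Shifting by any $f\in F$ then moves the point $\{x a/q\}$ by at most $1/Q$ on the circle. Hence only points near the two endpoints of the arc $[0,\alpha)$ can leave $P$ after a shift. Crucially, this exceptional region is the same for all $f\in F$, so we do not lose a factor $k$.

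First, we fix a prime $q\in[2Q^{k+1},4Q^{k+1}]$, which exists by Bertrand's postulate; then certainly $q\ge Q$. Second, we find $a$ by Dirichlet's pigeonhole argument. Consider the $Q^k+1$ vectors
$$
v_j:=\left(\left\{\frac{jf}{q}\right\}\right)_{f\in F}\in[0,1)^k,\qquad j=0,1,\ldots,Q^k.
$$
Partition $[0,1)^k$ into $Q^k$ boxes of side $1/Q$. Two of these vectors, say $v_{j_1}$ and $v_{j_2}$ with $j_1<j_2$, lie in the same box. Setting $a:=j_2-j_1\in[1,Q^k]$, we get $\|af/q\|\le 1/Q$ for every $f\in F$. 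Since $1\le a<q$ and $q$ is prime, $\gcd(a,q)=1$. (Alternatively, one could take $q$ a power of $2$ and use the parity structure; this is presumably why $Q$ is assumed to be a power of $2$, but the prime choice avoids gcd issues.)

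Third, we verify the two properties. Because $\gcd(a,q)=1$, the map $x\mapsto xa \bmod q$ permutes $\{0,\ldots,q-1\}$. Hence
$$
|P\cap[0,q-1]|=|\{t\in\{0,\ldots,q-1\}: t/q<\alpha\}|=\lceil \alpha q\rceil,
$$
which is within $1$ of $\alpha q$. For the second property, suppose $x\in P$ and $x+f\notin P$ for some $f\in F$. Then $\{xa/q\}\in[0,\alpha)$, while $\{(x+f)a/q\}$ differs from it by at most $1/Q$ on the circle and lies outside $[0,\alpha)$. This forces
$$
\{xa/q\}\in[\alpha-1/Q,\alpha)\cup[0,1/Q).
$$
This condition does not depend on $f$. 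Using the permutation once more, the number of such residues $x\in[0,q-1]$ is at most $2(q/Q+1)\le 4q/Q\le \eta q/2$, where we used $q\ge Q$ and $Q\ge 8/\eta$.

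Finally, the set we must bound is $(P+F)\setminus P$, that is, points $y\notin P$ with $y-f\in P$, whereas the computation above counted $x\in P$ with $x+f\notin P$. Each $y\in(P+F)\setminus P$ is of the form $x+f$ with $x$ exceptional. So I will bound this set directly by the symmetric argument. If $y\notin P$ and $y-f\in P$, then $\{ya/q\}$ lies within $1/Q$ of $[0,\alpha)$ but outside it, i.e.\ in $[\alpha,\alpha+1/Q)\cup[1-1/Q,1)$. Counting gives again at most $4q/Q\le\eta q$ residues, uniformly in $f$.

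The main obstacle is the second step: we need one common rotation for all $k$ shifts, with a modulus that is bounded polynomially as $Q^{k+1}$. Simultaneous Dirichlet approximation handles exactly this, and it is why the exponent $k+1$ appears in the bound on $q$.
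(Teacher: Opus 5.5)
Your proof is correct. The second half is essentially the paper's argument: the Bohr set $P=\{x\in\mathbb{Z}:\{xa/q\}\in[0,\alpha)\}$, the fact that every shift by $f\in F$ moves $\{xa/q\}$ by less than $1/Q$, and the count of two exceptional arcs that do not depend on $f$. The only change there is that you get $\lceil\alpha q\rceil$ elements where the paper gets $\lfloor\alpha q\rfloor$.

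The genuine difference is how the modulus and the coprime multiplier are produced.

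\textbf{The paper's route.} It fixes the modulus $4Q^{k+1}$, a power of $2$, and applies pigeonhole to the $2Q^{k+1}+1$ points $\bigl(rf_i/(4Q^{k+1})\bigr)_i$. It then needs an extra step. Some box contains at least $2Q+1$ indices, and these cannot all lie in one residue class modulo $4Q^k$. This gives two indices whose difference $t$ has $g=\gcd(t,4Q^{k+1})$ small enough that the reduced modulus $q=4Q^{k+1}/g$ exceeds $Q$. The multiplier is $u=t/g$. This is exactly where the hypothesis that $Q$ is a power of $2$ is used.

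\textbf{Your route.} You take a prime $q\in(2Q^{k+1},4Q^{k+1}]$ by Bertrand's postulate. Coprimality of $a=j_2-j_1\le Q^k<q$ and the bound $q\ge Q$ then come for free, and $Q^k+1$ pigeonhole points suffice.

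\textbf{Trade-off.} Your argument is shorter and shows the power-of-$2$ assumption on $Q$ is unnecessary: any integer $Q\ge 8/\eta$ works. It costs an appeal to Bertrand's postulate, whereas the paper's proof is entirely self-contained. The later application in the proof of Theorem \ref{thm:logarithmic} uses only $Q\le q\le 4Q^{k+1}$, so either version serves equally well there.
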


\begin{proof}
Write $F=\{f_1,\ldots,f_k\}$ and consider the points
$$
\left\{\left(\frac{rf_1}{4Q^{k+1}},\ldots,\frac{rf_k}{4Q^{k+1}}\right) \in \mathbb{T}^k:\,\, 0\le r\le2Q^{k+1}\right\},
$$
where $\mathbb{T}^k$ stands for the $k$-dimensional torus. Partitioning each coordinate into $Q$ intervals of length $1/Q$, one of the resulting $Q^k$ boxes contains at least $2Q+1$ of these points.

We claim that there exist two distinct indices $r,s \in [0,2Q^{k+1}]$ such that
$$
\frac{4Q^{k+1}}{\mathrm{gcd}(|r-s|,4Q^{k+1})}>Q.
$$
Indeed, otherwise, since $4Q^{k+1}$ and $Q$ are powers of $2$, the difference of every two indices in the same box would be divisible by $4Q^{k+1}/Q=4Q^k$. Hence all these indices would belong to the same residue class modulo $4Q^{k}$, which contains at most $Q/2+1$ integers in $[0,2Q^{k+1}]$, a contradiction.

Fix such indices $r,s$ and define $t:=|r-s|$, $g:=\mathrm{gcd}(t,4Q^{k+1})$, $q:=4Q^{k+1}/g$, and $u:=t/g$. Then $\mathrm{gcd}(u,q)=1$, $Q<q\le 4Q^{k+1}$, and, since the two corresponding points belong to the same box, we get 
$$
\left\|\frac{uf_i}{q}\right\|\le\frac1Q
\qquad\text{for all }i=1,\ldots,k.
$$

Define $J:=\{0,\ldots,\lfloor\alpha q\rfloor-1\}\subseteq\mathbb{Z}/q\mathbb{Z}$ and 
$$
P:=\{n\in\mathbb{Z}:un\bmod q\in J\}.
$$
It follows that $|P\cap[0,q-1]|=\lfloor\alpha q\rfloor$, which proves the first conclusion.

For each $i=1,\ldots,k$, let $a_i$ be the representative of $uf_i$ modulo $q$ such that $|a_i|\le q/Q$. Under the bijection $n\mapsto un\bmod q$, the set $P+f_i$ corresponds to $J+a_i$. Hence all the residue classes in $(P+F)\setminus P$ are contained in the two intervals adjacent to $J$, each of length at most $\lceil q/Q\rceil$. Therefore
$$
\frac{|((P+F)\setminus P)\cap[0,q-1]|}{q}
\le \frac{2}{q}\left\lceil \frac{q}{Q}\right\rceil
\le\frac2Q+\frac2q
<\frac4Q
\le\eta.
$$

This concludes the proof.
\end{proof}

We will repeatedly use the following consequence of Lemma \ref{lem:periodic}. If $I\subseteq\mathbb{Z}$ is a finite interval, then
\begin{equation}\label{eq:periodic-interval}
\left||P\cap I|-\alpha|I|\right|
\le2q+\frac{|I|}{q}
\quad\text{and}\quad
|((P+F)\setminus P)\cap I|
\le\eta|I|+2q.
\end{equation}
Indeed, decompose $I$ into pairwise disjoint intervals of length $q$, together with at most two remaining intervals whose total length is less than $2q$. By the $q$-periodicity of $P$, on each interval of length $q$ we have
$$
\left||P\cap J|-\alpha q\right|\le1
\quad\text{and}\quad
|((P+F)\setminus P)\cap J|\le\eta q.
$$
Since there are at most $|I|/q$ such intervals, while the two remaining intervals contribute at most $2q$, the estimates in \eqref{eq:periodic-interval} follow.

\medskip

Set $b:=\min B$ and $D:=B-b\subseteq\mathbb{N}$. In particular, $0\in D$. Since translation by an integer does not affect the hypothesis on $B$, we can fix $c\in (0,1)$ such that
\begin{equation}\label{eq:logarithmic-count}
|D\cap[0,n]|
\le c\,\frac{\log n}{\log\log n}
\end{equation}
for all sufficiently large $n$. Pick also a real number $\lambda>1/(1-c)$, so that
$$
\frac{\lambda-1}{c\lambda}>1.
$$

\medskip

We construct recursively two sequences $(\ell_j:j\in\mathbb{N}^+)$ and $(r_j:j\in\mathbb{N}^+)$ of elements of $D$ such that
$$
\ell_j\le r_j\le \min\{\ell_j^\lambda, \ell_{j+1}-1\}
\qquad\text{ and }\qquad
D\cap(r_j,\ell_{j+1})=\emptyset.
$$
Start with an arbitrary sufficiently large $\ell_1\in D$. Once $\ell_j$ has been chosen, list the elements of $D\cap[\ell_j,\ell_j^\lambda]$ as 
$
x_0=\ell_j<x_1<\cdots<x_m\le\ell_j^\lambda,
$ 
and let $x_{m+1}$ be the first element of $D$ larger than $\ell_j^\lambda$. Since $x_0=\ell_j$ and $x_{m+1}>\ell_j^\lambda$, we have
$$
\sum_{i=0}^{m}\log\left(\frac{x_{i+1}}{x_i}\right)
=
\log\left(\frac{x_{m+1}}{\ell_j}\right)
>
(\lambda-1)\log(\ell_j).
$$
Since $m+1\le |D\cap[0,\ell_j^\lambda]|$, it follows that, for some $i\in\{0,\ldots,m\}$,
$$
\log\left(\frac{x_{i+1}}{x_i}\right)
\ge
\frac{(\lambda-1)\log(\ell_j)}
{|D\cap[0,\ell_j^\lambda]|}.
$$
Define $r_j:=x_i$ and $\ell_{j+1}:=x_{i+1}$. By \eqref{eq:logarithmic-count},
\begin{equation}\label{eq:logarithmic-gap}
\log\frac{\ell_{j+1}}{r_j}
\ge
\left(\frac{\lambda-1}{c\lambda}-o(1)\right)
\log\log\ell_j.
\end{equation}

Now, we claim that
\begin{equation}\label{eq:gap-strong}
\log\frac{\ell_{j+1}}{r_j}-\log\log\ell_{j+1}\longrightarrow\infty
\qquad \text{as }   j\to \infty.
\end{equation}
Indeed, choose $\gamma>1$ so that the right-hand side of \eqref{eq:logarithmic-gap} is at least $\gamma\log\log\ell_j$ for all sufficiently large $j$. If $\log\ell_{j+1}\le2\lambda\log\ell_j$, then $\log\log\ell_{j+1}=\log\log\ell_j+O(1)$, and the claim follows from \eqref{eq:logarithmic-gap}. Otherwise, since $r_j\le\ell_j^\lambda$, we obtain
$$
\log\frac{\ell_{j+1}}{r_j}
\ge
\log\ell_{j+1}-\lambda\log\ell_j
>\frac12\log\ell_{j+1},
$$
and \eqref{eq:gap-strong} follows again.

For each $j\in\mathbb{N}^+$, set $D_j:=D\cap[0,r_j]$ and $k_j:=|D_j|$. Since $r_j\le\ell_j^\lambda$, it follows by \eqref{eq:logarithmic-count} that
\begin{equation}\label{eq:kj-bound}
k_j=O\left(\frac{\log\ell_j}{\log\log\ell_j}\right)
\qquad \text{as }   j\to \infty.
\end{equation}
In addition, by \eqref{eq:gap-strong},
\begin{equation}\label{eq:previous-shifts}
\frac{k_jr_{j-1}}{\ell_j}\longrightarrow0
\qquad \text{as }   j\to \infty.
\end{equation}
Indeed, $\log(r_{j-1}/\ell_j)=-\log(\ell_j/r_{j-1})$, whereas \eqref{eq:kj-bound} gives $\log k_j\le\log\log\ell_j+O(1)$.

\medskip

Set $a_j:=\log\ell_j$ and

$$
s_j:=\min\left\{j,\sqrt{\frac{a_j}{k_j}}\right\}.
$$
By \eqref{eq:kj-bound}, we have $s_j\to\infty$. Define $\eta_j:=e^{-s_j}$ and apply Lemma \ref{lem:periodic} to $D_j$, $\alpha$, and $\eta_j$. Denote by $P_j$ the resulting periodic set and by $q_j$ its period. Letting $Q_j$ be a power of $2$ with $8/\eta_j\le Q_j<16/\eta_j$, we obtain 
$$
\log q_j
\le O(1)+(k_j+1)(s_j+O(1))
=o(\log\ell_j)
\qquad \text{as }   j\to \infty.
$$
Indeed, $k_js_j\le\sqrt{k_ja_j}=o(a_j)$ by \eqref{eq:kj-bound}. Consequently
\begin{equation}\label{eq:qj-small}
q_j=\ell_j^{o(1)}
\quad\text{and}\quad
\frac{k_jq_j}{\ell_j}\longrightarrow0
\qquad \text{as }   j\to \infty.
\end{equation}

\medskip

Define $m_j:=\max\{r_{j-1},q_j,1\}$, where $r_0:=1$. By \eqref{eq:previous-shifts} and \eqref{eq:qj-small}, 
$
\lim_j k_jm_j/\ell_j=0.
$ 
For all sufficiently large $j$, set
$$
t_j:=\left\lceil\sqrt{\frac{m_j\ell_j}{k_j}}\right\rceil.
$$ 
It follows that
\begin{equation}\label{eq:tj-properties}
\frac{m_j}{t_j}\longrightarrow0 
\quad\text{and}\quad
\frac{k_jt_j}{\ell_j}\longrightarrow0
\qquad \text{as }   j\to \infty.
\end{equation}
In particular, $r_{j-1}<t_j<\ell_j$ for all sufficiently large $j$. Discarding finitely many initial terms, we can suppose that these inequalities hold for every $j$. Since 
$
t_{j-1}<\ell_{j-1}\le r_{j-1}<t_j,
$ 
the sequence $(t_j)$ is strictly increasing. Moreover, \eqref{eq:tj-properties} implies
\begin{equation}\label{eq:tj-small}
\frac{q_j}{t_j}\longrightarrow0,
\quad
\frac{r_{j-1}}{t_j}\longrightarrow0,
\quad
\frac{t_j}{\ell_j}\longrightarrow0,
\quad\text{and}\quad
\frac{t_j}{t_{j+1}}\longrightarrow0
\quad \text{as }   j\to \infty.
\end{equation}
For the last limit, it is enough to observe that $t_j<\ell_j\le r_j$ and apply the second limit in \eqref{eq:tj-small} with $j+1$ in place of $j$.

\medskip

We can now construct the required set. Define $C\subseteq\mathbb{N}^+$ so that
$$
C\cap[t_j,t_{j+1})
=
P_j\cap[t_j,t_{j+1})
$$
for all $j\in\mathbb{N}^+$, and $C\cap[0,t_{1}):=\emptyset$.

\medskip

We first show that $\mathsf{d}(C)=\alpha$. By \eqref{eq:periodic-interval}, applied to $[t_{j-1},t_j)$, we obtain

$$
\left|
|C\cap[1,t_j]|-\alpha t_j
\right|
\le
t_{j-1}+2q_{j-1}+\frac{t_j}{q_{j-1}}+O(1)
\quad \text{as }   j\to \infty.
$$
Taking into account \eqref{eq:tj-small} and the fact that $q_j\to\infty$, it follows that
\begin{equation}\label{eq:C-boundary}
\frac{|C\cap[1,t_j]|}{t_j}\longrightarrow\alpha
\quad \text{as }   j\to \infty.
\end{equation}
Now, if $t_j\le n<t_{j+1}$, another application of \eqref{eq:periodic-interval} gives
$$
\left|
|C\cap[1,n]|-\alpha n
\right|
\le
\left|
|C\cap[1,t_j]|-\alpha t_j
\right|
+2q_j+\frac{n-t_j}{q_j}+O(1)
\quad \text{as }   j\to \infty.
$$
Dividing by $n$ and using \eqref{eq:C-boundary} and \eqref{eq:tj-small}, we conclude that 
$
\mathsf{d}(C)=\alpha.
$

\medskip

Lastly, we prove that
\begin{equation}\label{eq:enlargement-zero}
\mathsf{d}((C+D)\setminus C)=0.
\end{equation}
Set $E:=(C+D)\setminus C$, fix $j\in\mathbb{N}^+$, and let $t_j\le n\le t_{j+1}$. Pick $z\in E\cap[t_j,n)$ and write $z=x+d$, with $x\in C$ and $d\in D$.

Suppose first that $x\ge t_j$. Since $z<t_{j+1}<\ell_{j+1}$ and $D\cap(r_j,\ell_{j+1})=\emptyset$, we have $d\le r_j$, hence $d\in D_j$. Therefore by \eqref{eq:periodic-interval} the number of such points $z$ is at most
$$
\eta_j(n-t_j)+2q_j.
$$
Suppose now that $x<t_j$. If $d\le r_{j-1}$, then $z<t_j+r_{j-1}$, so the number of such points is at most $r_{j-1}+O(1)$. If $d>r_{j-1}$, then, since $D\cap(r_{j-1},\ell_j)=\emptyset$, we have $d\ge\ell_j$. Hence no such representation exists if $n\le\ell_j$. On the other hand, if $n>\ell_j$, then $d<z<\ell_{j+1}$, so the gap $D\cap(r_j,\ell_{j+1})=\emptyset$ implies that $d\le r_j$. Thus $d\in D_j\setminus D_{j-1}$, and the number of pairs $(x,d)$ of this type is at most $k_jt_j$. We conclude that
\begin{equation}\label{eq:E-estimate}
|E\cap[t_j,n)|
\le
\eta_j(n-t_j)+2q_j+r_{j-1}
+\mathbf{1}_{\{n>\ell_j\}}k_jt_j+O(1).
\end{equation}
Applying \eqref{eq:E-estimate} to the preceding block with $n=t_j$, and using trivially $|E\cap[1,t_{j-1}]|\le t_{j-1}$, we obtain 
$$
\frac{|E\cap[1,t_j]|}{t_j}
\le
\frac{t_{j-1}}{t_j}
+\eta_{j-1}
+\frac{2q_{j-1}}{t_j}
+\frac{r_{j-2}}{t_j}
+\frac{k_{j-1}t_{j-1}}{t_j}
+o(1).
$$
Every term on the right-hand side tends to $0$. Indeed, this follows from \eqref{eq:tj-properties}--\eqref{eq:tj-small}, together with 
$
k_{j-1}t_{j-1}/t_j
\le
k_{j-1}t_{j-1}/\ell_{j-1}
\to 0
$ as $j\to \infty$. 
Hence
\begin{equation}\label{eq:E-boundary}
\frac{|E\cap[1,t_j]|}{t_j}\longrightarrow0
\quad \text{as }   j\to \infty.
\end{equation}
For an arbitrary $n$ with $t_j\le n<t_{j+1}$, it follows by \eqref{eq:E-estimate} and \eqref{eq:E-boundary} that
$$
\frac{|E\cap[1,n]|}{n}
\le
\frac{|E\cap[1,t_j]|}{t_j}
+\eta_j
+\frac{2q_j}{t_j}
+\frac{r_{j-1}}{t_j}
+\mathbf{1}_{\{n>\ell_j\}}\frac{k_jt_j}{n}
+o(1).
$$
If $n>\ell_j$, then $k_jt_j/n\le k_jt_j/\ell_j\to0$ as $j\to \infty$   by \eqref{eq:tj-properties}; all the remaining terms tend to $0$ by \eqref{eq:tj-small}. This proves \eqref{eq:enlargement-zero}.

\medskip

Since $0\in D$, we have $C\subseteq C+D$. Hence $\mathsf{d}(C)=\alpha$ and \eqref{eq:enlargement-zero} yield
$$
\mathsf{d}(C+D)=\alpha.
$$
To conclude, recall that $D=B-b$, and define
$$
A:=\{x-b:x\in C,\ x>b\}\subseteq\mathbb{N}^+.
$$
Then
$$
A+B=(C\cap(b,\infty))+D\subseteq C+D,
$$
and
$$
(C+D)\setminus(A+B)
\subseteq
(C\cap[1,b])+D.
$$
The set on the right is a finite union of translates of $D$. Since \eqref{eq:logarithmic-count} implies that $\mathsf{d}(D)=0$, it follows that 
$
\mathsf{d}\bigl((C+D)\setminus(A+B)\bigr)=0.
$ 
Therefore $\mathsf{d}(A)=\mathsf{d}(C)=\alpha$ and $\mathsf{d}(A+B)=\mathsf{d}(C+D)=\alpha$, as desired.

\bigskip

\section{Proof of Theorem \ref{thm:counterexample}}

We start with the following probabilistic lemma.

\begin{lem}\label{lem:counterexample-random-block}
Fix integers $h,\ell,m,k\in\mathbb{N}^+$ and reals $p,\eta\in(0,1]$. Suppose that
$$
\frac{1}{p}\log\left(\frac{4}{\eta}\right)\le k\le m
\quad\text{and}\quad
\ell\ge \frac{10mk^2}{\eta^2}.
$$

Then there exists a set $D\subseteq [h+1,h+\ell]$ such that the following hold\textup{:}
\begin{enumerate}[label={\rm (\roman{*})}]
\item \label{item:counterexample-covering}
for every $F\subseteq [1,m]$ with $|F|\ge k$, we have

$$
|(F+D)\cap [h+m+1,h+\ell]|
\ge \ell-m-\eta\ell\textup{;}
$$

\item \label{item:counterexample-prefix}
for every $t\in\{1,\ldots,\ell\}$, we have

$$
|D\cap [h+1,h+t]|
\le 2pt+\frac{1}{p}\log(8\ell)\textup{.}
$$

\end{enumerate}
\end{lem}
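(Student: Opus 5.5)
The plan is to choose $D$ at random and show that each of the two properties fails with probability less than $1/2$. Concretely, let $(Z_x: x\in[h+1,h+\ell])$ be independent Bernoulli random variables with $P(Z_x=1)=p$, and set $D:=\{x: Z_x=1\}$. We will prove that \ref{item:counterexample-covering} fails with probability at most $1/4$ and \ref{item:counterexample-prefix} fails with probability at most $1/8$. Then some outcome satisfies both.

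\emph{Step 1: reduction in \ref{item:counterexample-covering}.} If $F\subseteq F'$, then $F+D\subseteq F'+D$, so it suffices to treat sets $F\subseteq[1,m]$ with $|F|=k$ exactly. There are $\binom{m}{k}\le 2^m$ of them.

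\emph{Step 2: expectation for a fixed $F$.} Fix such an $F$ and let $U_F$ be the number of $z\in[h+m+1,h+\ell]$ with $z\notin F+D$. For each such $z$, the $k$ integers $z-f$ with $f\in F$ are distinct and lie in $[h+1,h+\ell]$. Hence
$$
P(z\notin F+D)=(1-p)^k\le e^{-pk}\le \frac{\eta}{4},
$$
using the hypothesis $k\ge \frac1p\log(4/\eta)$. Therefore $\mathbb{E}U_F\le \eta\ell/4$.

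\emph{Step 3: concentration and union bound.} $U_F$ is a function of the $\ell$ independent variables $Z_x$. Changing a single $Z_x$ alters membership in $F+D$ only for the $k$ points $x+f$ with $f\in F$, so $U_F$ changes by at most $k$. McDiarmid's inequality \cite[Lemma 1.2]{MR1036755} then gives
$$
P\left(U_F\ge \tfrac{\eta\ell}{2}\right)\le P\left(U_F-\mathbb{E}U_F\ge \tfrac{\eta\ell}{4}\right)\le \exp\left(-\frac{2(\eta\ell/4)^2}{\ell k^2}\right)=\exp\left(-\frac{\eta^2\ell}{8k^2}\right)\le e^{-5m/4},
$$
where the last step uses $\ell\ge 10mk^2/\eta^2$. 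A union bound over at most $2^m$ sets $F$ bounds the total failure probability by $2^me^{-5m/4}=(2e^{-5/4})^m<1/4$, since $2e^{-5/4}\approx 0.57$ and $m\ge1$. On the complementary event, every $F$ with $|F|=k$ has $U_F<\eta\ell/2$. The interval $[h+m+1,h+\ell]$ has $\ell-m$ points, so $|(F+D)\cap[h+m+1,h+\ell]|\ge \ell-m-\eta\ell$, as required.

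\emph{Step 4: property \ref{item:counterexample-prefix}.} For fixed $t$, the count $X_t:=|D\cap[h+1,h+t]|$ is $\mathrm{Bin}(t,p)$, so $\mathbb{E}e^{X_t}\le \exp(pt(e-1))\le e^{2pt}$. By Markov's inequality applied to $e^{X_t}$,
$$
P\left(X_t> 2pt+\tfrac1p\log(8\ell)\right)\le \exp\left(-\tfrac1p\log(8\ell)\right)\le \frac{1}{8\ell},
$$
since $p\le 1$. A union bound over the $\ell$ values of $t$ gives failure probability at most $1/8$.

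\emph{Conclusion and main obstacle.} Since $1/4+1/8<1$, there is an outcome in which both \ref{item:counterexample-covering} and \ref{item:counterexample-prefix} hold, and the corresponding $D$ is the required set. The only delicate point is Step 3: a union bound over all subsets $F\subseteq[1,m]$ needs a tail bound of order $e^{-cm}$. The bounded-differences constant $k$ costs a factor $k^2$ in the exponent, and the hypothesis $\ell\ge 10mk^2/\eta^2$ is exactly what absorbs this cost.
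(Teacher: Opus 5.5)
Your proof follows the paper's: the same random model, the reduction to $|F|=k$, the bound $\mathbb{E}U_F\le\eta\ell/4$, McDiarmid with bounded differences $k$, and a union bound over at most $2^m$ sets. For (ii) you use $\mathbb{E}e^{X_t}\le e^{2pt}$ with Markov's inequality instead of the paper's McDiarmid-plus-AM--GM estimate, which is equally valid and slightly more direct.

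There is one arithmetic slip in Step 3. The bound $(2e^{-5/4})^m<1/4$ is false for $m\le 2$, and $m=2$ is admissible, since the hypotheses only force $m\ge k\ge 2$. The slip is harmless, because $(2e^{-5/4})^m\le 2e^{-5/4}<0.58$ and $0.58+1/8<1$. Alternatively, bound $P(U_F>\eta\ell)$ directly with deviation $3\eta\ell/4$, as the paper does; this gives $e^{-90m/8}$ per set and leaves ample room.
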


\begin{proof}
Pick a probability measure space $(\Omega, \mathscr{F}, P)$ and independent Bernoulli random variables $(X_n: n \in [h+1,h+\ell])$ such that $P(X_n=1)=p$ for all $n \in [h+1,h+\ell]$. Then, for each $\omega \in \Omega$, define 
$$
D(\omega):=\{n \in [h+1,h+\ell]: X_n(\omega)=1\}.
$$
Let $\Omega_1$ and $\Omega_2$ be the measurable events where items \ref{item:counterexample-covering} and \ref{item:counterexample-prefix} fail, respectively. 


\medskip

We first prove that $P(\Omega_1)<1/8$. It is enough to consider sets $F\subseteq [1,m]$ with $|F|=k$. Fix one such set $F$, and for each $\omega\in \Omega$ define 
$$
U_F(\omega):=\left|[h+m+1,h+\ell]\setminus (F+D(\omega))\right|.
$$
For each $n\in [h+m+1,h+\ell]$, the integers $n-f$, with $f\in F$, are pairwise distinct and belong to $[h+1,h+\ell]$. Therefore
$$
P(n\notin F+D)=(1-p)^k\le e^{-pk}\le \frac{\eta}{4},
$$
which implies that $\mathbb{E}U_F\le \eta\ell/4$.

Changing the membership in $D$ of a single integer can modify $U_F$ by at most $k$. Hence, by McDiarmid's inequality \cite[Lemma 1.2]{MR1036755}, we obtain

$$
P(U_F>\eta\ell)
\le
2\,\mathrm{exp}\left(-2\,\frac{(3\eta\ell/4)^2}{\ell k^2}\right)
=
2\,\mathrm{exp}\left(-\frac{9\eta^2\ell}{8k^2}\right).
$$

There are at most $2^m$ choices for $F$. Hence the probability that item \ref{item:counterexample-covering} fails for at least one $F$ is at most

$$
2^{m+1}\,\mathrm{exp}\left(-\frac{9\eta^2\ell}{8k^2}\right)
\le
2\,\mathrm{exp}\left(\left(\log 2-\frac{90}{8}\right)m\right)
<\frac18.
$$

\medskip

We prove 
that $P(\Omega_2)<1/8$ 
similarly. Fix $t\in\{1,\ldots,\ell\}$ and define $S_t(\omega):=|D(\omega)\cap [h+1,h+t]|$, so that $\mathbb{E}S_t=pt$. Set also $r:=\log(8\ell)/p$. Since

$$
\frac{(pt+r)^2}{t}
=
p^2t+2pr+\frac{r^2}{t}
\ge 2pr+2\sqrt{p^2t\cdot \frac{r^2}{t}}
=4pr,
$$
it follows again by McDiarmid's inequality that
$$
P(S_t>2pt+r)
\le
2\,\mathrm{exp}\left(-2\,\frac{(pt+r)^2}{t}\right)
\le
2e^{-8pr}
=
2(8\ell)^{-8}.
$$

Therefore the probability that item \ref{item:counterexample-prefix} fails for at least one $t\in\{1,\ldots,\ell\}$ is at most $2\ell(8\ell)^{-8}<1/8$. 

Putting everything together, we get $P(\Omega\setminus (\Omega_1\cup \Omega_2))>1/2$, so that there exists $\omega \in \Omega$ for which $D(\omega)$ satisfies both items \ref{item:counterexample-covering}--\ref{item:counterexample-prefix}. This concludes the proof.
\end{proof}

At this point, we proceed with the construction of $B$. Fix $\varepsilon>0$, and define

$$
\delta:=\min\left\{\frac{\varepsilon}{2},\frac{1}{12}\right\}
\quad\text{and}\quad
\theta:=\frac23+\delta.
$$

In particular, $\nicefrac{2}{3}<\theta<1$ and $\theta<\nicefrac{2}{3}+\varepsilon$.

Set $B_3:=\{1\}$ and $\rho_3:=1$. Suppose that, for some $j\ge4$, a finite set $B_{j-1}\subseteq [1,\rho_{j-1}]$ has been constructed, and set $s_{j-1}:=|B_{j-1}|$. Pick an integer $m_j>\rho_{j-1}$ sufficiently large, and define
\begin{displaymath}
\begin{split}
k_j&:=\left\lfloor\frac{m_j}{j s_{j-1}}\right\rfloor,
\qquad
p_j:=\frac{2j s_{j-1}\log(4j)}{m_j},
\qquad
\eta_j:=\frac1j,\\
h_j&:=m_j^2,
\qquad
\ell_j:=\left\lceil\frac{10m_j^3}{s_{j-1}^2}\right\rceil,
\quad \text{ and }\quad
r_j:=\frac{1}{p_j}\log(8\ell_j).
\end{split}
\end{displaymath}
Observe that we can choose $m_j$ so large that
\begin{displaymath}
\begin{split}
\frac{m_j}{j s_{j-1}}&\ge2,\qquad
p_j\le1,\qquad
\frac{h_j}{\ell_j}\le\frac1j,\qquad
\frac{m_j}{\ell_j}\le\frac1j,\\
s_{j-1}&\le h_j^\theta,\qquad
r_j\le h_j^\theta,
\quad \text{ and }\quad
2p_j(h_j+\ell_j)^{1-\theta}\le1.
\end{split}
\end{displaymath}
Indeed, with $j$ and $s_{j-1}$ fixed and $m_j\to\infty$, we have 
$$
r_j=O_{j,s_{j-1}}(m_j\log m_j)
\quad \text{ and }\quad
p_j(h_j+\ell_j)^{1-\theta}
=
O_{j,s_{j-1}}(m_j^{2-3\theta})
=o(1),
$$
where the last equality follows from $\theta>\nicefrac{2}{3}$. The remaining conditions are immediate for all sufficiently large $m_j$.

By the choice of $m_j$, we have

$$
k_j\ge\frac{m_j}{2j s_{j-1}},
\quad
p_jk_j\ge\log(4j)=\log\left(\frac4{\eta_j}\right),
\quad \text{ and }\quad
\frac{10m_jk_j^2}{\eta_j^2}
\le\frac{10m_j^3}{s_{j-1}^2}\le\ell_j.
$$

Hence, by Lemma \ref{lem:counterexample-random-block}, there exists a set $D_j\subseteq [h_j+1,h_j+\ell_j]$ satisfying items \ref{item:counterexample-covering}--\ref{item:counterexample-prefix}. Define $B_j:=B_{j-1}\cup D_j$ and $\rho_j:=h_j+\ell_j$. Since $h_j=m_j^2>m_j>\rho_{j-1}$, we have $\min D_j>\max B_{j-1}$. Finally, define
$$
B:=\bigcup_{j\ge3}B_j
=
\{1\}\cup\bigcup_{j\ge4}D_j.
$$

\medskip

Notice that each $D_j$ is nonempty. Indeed, by the choice of $m_j$,
$$
\ell_j-m_j-\frac{\ell_j}{j}
\ge
\ell_j\left(1-\frac2j\right)>0,
$$
and hence item \ref{item:counterexample-covering} would fail if $D_j=\emptyset$. Therefore $B$ is infinite.

\medskip

At this point, we claim that
\begin{equation}\label{eq:counterexample-growth}
|B\cap[1,n]|\le3n^\theta
\end{equation}
for all sufficiently large $n\in\mathbb{N}^+$. To this aim, suppose first that $h_j\le n\le h_j+\ell_j$ for some $j\ge4$, and set $t:=n-h_j$. If $t\ge1$, then item \ref{item:counterexample-prefix} gives

$$
|B\cap[1,n]|
\le
s_{j-1}+2p_jt+r_j
\le 3n^\theta.
$$

Indeed, $s_{j-1}\le h_j^\theta\le n^\theta$ and $r_j\le h_j^\theta\le n^\theta$, while
$
2p_jt\le2p_jn\le n^\theta
$
by the choice of $m_j$. The case $t=0$ is immediate. 
If instead $\rho_j\le n<h_{j+1}$, then $B\cap(\rho_j,n]=\emptyset$. Therefore, using the estimate above at $\rho_j$, we obtain 
$
|B\cap[1,n]|
=
|B\cap[1,\rho_j]|
\le3\rho_j^\theta
\le3n^\theta.
$ 
This proves \eqref{eq:counterexample-growth}. Since $\theta<2/3+\varepsilon$, it follows that 
$$
|B\cap[1,n]|=O(n^{2/3+\varepsilon})
\quad\text{as }n\to\infty.
$$

\medskip

It remains to prove the last claim. Fix $A\subseteq\mathbb{N}^+$ such that $A+B$ admits asymptotic density, and set $\alpha:=\mathsf{d}(A+B)$. There is nothing to prove if $\alpha=0$, hence suppose hereafter that $\alpha>0$. Since $m_j\to\infty$ and $1/j\to0$ as $j\to \infty$, for all sufficiently large $j$ we have
$$
|(A+B)\cap[1,m_j]|\ge\frac{m_j}{j}.
$$ 
On the other hand, since $\rho_{j-1}<m_j< h_j$, we have $|B\cap[1,m_j]|=|B_{j-1}|=s_{j-1}$. Define $A_j:=A\cap[1,m_j]$. Every integer in $(A+B)\cap[1,m_j]$ is the sum of an element of $A_j$ and an element of $B\cap[1,m_j]$. Therefore
$$
\frac{m_j}{j}
\le
|(A+B)\cap[1,m_j]|
\le
|A_j|s_{j-1},
$$
which implies that $|A_j|\ge m_j/(j s_{j-1})\ge k_j$.

Applying item \ref{item:counterexample-covering} to $A_j$ and recalling that $D_j\subseteq B$, we obtain

$$
|(A+B)\cap[1,h_j+\ell_j]|
\ge
\ell_j-m_j-\frac{\ell_j}{j}.
$$

Hence

$$
\frac{|(A+B)\cap[1,h_j+\ell_j]|}{h_j+\ell_j}
\ge
\frac{1-m_j/\ell_j-1/j}{1+h_j/\ell_j}
\ge
\frac{1-2/j}{1+1/j}
\longrightarrow1
\quad 
\text{ as }j\to \infty.
$$ 
It follows that
$$
\limsup_{n\to\infty}
\frac{|(A+B)\cap[1,n]|}{n}=1.
$$ 
Therefore $\alpha=1$. We conclude that $\mathsf{d}(A+B)\in\{0,1\}$ whenever $A+B$ admits asymptotic density.


\bibliographystyle{amsplain}

\end{document}